\documentclass{eajam}
\renewcommand\thisnumber{x}
\renewcommand\thisyear {200x}
\renewcommand\thismonth{xxx}
\renewcommand\thisvolume{xx}

\usepackage{graphicx}
\usepackage{subfigure}
\usepackage{cleveref}
\usepackage{amsmath,amsthm,amssymb}
\usepackage{tikz}
\usepackage{multirow}

\crefname{equation}{}{}
\crefname{theorem}{Theorem}{Theorems}
\crefname{figure}{Figure}{Figures}
\crefname{table}{Table}{Tables}
\crefname{section}{Section}{Sections}
\crefname{example}{Example}{Examples}
\crefname{lemma}{Lemma}{Lemmas}
\crefname{remark}{Remark}{Remarks}

\begin{document}

\markboth{X.Z. Yang and X.B. Yin}{Analysis of Convergence for the IPA-AC Method}
\title{Analysis of Convergence for the IPA-AC Method}


\author[F.~Author and A.~Co-Author(s)]{Xiuzhu Yang\affil{1} and Xiaobo Yin\affil{2}\comma\corrauth}
\address{\affilnum{1}\ School of Mathematics and Statistics, and Hubei Key Laboratory of Mathematical Sciences, Central China Normal University, Wuhan, China. \\
\affilnum{2}\ School of Mathematics and Statistics, and Key Laboratory of Nonlinear Analysis \& Applications (Ministry of Education), Central China Normal University, Wuhan, China.}
%
%
\emails{{\tt janyyang@mails.ccnu.edu.cn} (X.Z. Yang), {\tt yinxb@ccnu.edu.cn} (X.B. Yin)}
%
\begin{abstract}
The Improved Partial Area-Analytical Calculation (IPA-AC) method represents a leading meshfree discretization strategy for peridynamic models, distinguished by its rigorous geometric treatment of boundary intersections via dual corrections of integration weights and quadrature points. Despite its empirical success in suppressing boundary-induced geometric errors, a systematic theoretical characterization of its convergence behaviors under distinct scaling limits has remained elusive. This work establishes a unified convergence framework for the IPA-AC method applied to both scalar and tensor kernels. By leveraging the Lax Equivalence Theorem, we explicitly derive error estimates that reveal the method's performance across three critical limiting regimes. The theoretical analysis, substantiated by numerical validation, demonstrates that: (1) for a fixed horizon $\delta$, the method achieves robust second-order convergence $\mathcal{O}(h ^{2})$ with respect to the mesh size $h$; (2) for a fixed mesh, the discretization error scales as $\mathcal{O}(\delta^{-2})$, indicating a sensitivity to the nonlocal length scale; and (3) the method does not satisfy the Asymptotic Compatibility (AC) condition. These findings clarify that while the IPA-AC method offers superior accuracy for simulating fixed nonlocal models, it requires a sufficiently large horizon-to-mesh ratio to mitigate intrinsic discretization errors when approximating the local limit.
\end{abstract}

\keywords{Peridynamics, Meshfree method, IPA-AC algorithm, Convergence analysis, Asymptotic compatibility}

\ams{45A05, 45N05, 45P05, 46N20, 65R20, 65R99}

\maketitle


\section{Introduction}\label{sec:Introduction}
Peridynamics, originally introduced by Silling \cite{silling2000,silling2007peridynamic}, has become an important framework for modeling materials containing cracks and other displacement discontinuities \cite{javili2019peridynamics}. In contrast to classical continuum mechanics, whose governing equations rely on spatial derivatives, peridynamics adopts integral operators and thereby circumvents the mathematical difficulties associated with discontinuities. This nonlocal formulation enables the model to naturally accommodate jumps in the displacement field and has proven particularly effective for describing crack nucleation and propagation.

To facilitate numerical solutions in engineering contexts, suitable discretizations of the nonlocal integral operators are essential. The most natural strategy is to approximate the integral via a finite summation. Silling proposed the first meshfree discretization of this type \cite{silling2005meshfree}, in which the reference configuration is discretized into a set of nodes, each associated with a specific finite volume (or area) defined as a cell. The properties of a node reflect the average physical characteristics of its corresponding cell, yielding a piecewise-constant approximation. Moreover, since this approach does not rely on any background mesh, peridynamics can be viewed as a coarse-grained extension of molecular dynamics (MD) \cite{seleson2009peridynamics,seleson2014peridynamic,askari2008peridynamics}. Leveraging this conceptual similarity, the method can be straightforwardly implemented within existing MD-type simulation frameworks \cite{parks2011peridynamics,parks2008implementing}. Beyond meshfree schemes, alternative discretizations based on finite elements, collocation methods, and other numerical formulations have also been proposed and systematically studied \cite{ren2022fem,chen2011continuous,emmrich2007peridynamic,kilic2009structural,yu2011new}.

Regardless of the numerical method used, a central issue in the numerical treatment of nonlocal models is the convergence behavior of discrete approximations, which underpins both physical fidelity and numerical reliability. Bobaru et al. \cite{bobaru2009convergence} identified three representative limiting regimes for assessing convergence:
\begin{enumerate}
	\item ($\delta m$)-convergence: Occurs when $\delta\to0$ and $m$ increases accordingly, with the growth rate of m outpacing the decay rate of $\delta$. Under this regime, the numerical approximation not only converges to the analytical nonlocal solution, but also converges almost everywhere uniformly to the local classical solution.
	\item $m$-convergence: Occurs when the horizon scale δ is fixed and $m\to\infty$ (equivalent to $h\to0$). In this limit, the numerical peridynamic approximation converges to the exact nonlocal peridynamic solution for the given fixed $\delta$.
	\item $\delta$-convergence: Occurs when $\delta\to0$ while $m$ is kept as a fixed constant (i.e., $\delta$ and $h$ decrease proportionally), or when the growth rate of $m$ is slower than the decay rate of $\delta$. In this case, the numerical approximation only converges almost everywhere to a certain approximation of the classical solution, and such convergence cannot be guaranteed to be uniform in theory.
	\end{enumerate}

	The pioneering work of Bobaru et al. keenly pointed out that when the nonlocal scale and the mesh scale tend to zero proportionally, i.e., under the aforementioned $\delta$-convergence regime, the discrete model may suffer from inconsistent or indeterminate convergence issues in its transition to the classical local theory. It is precisely to rigorously address and resolve this theoretical pitfall that the rigorous mathematical foundation of asymptotic compatibility has been gradually established and systematically developed in the computational mathematics community. In this process, the first and foremost task is to construct a rigorous mathematical space and variational framework for nonlocal models and their numerical discretization. Du and co-authors provided a rigorous analytical framework for nonlocal diffusion problems on bounded domains in \cite{Du2012analysis}, and theoretically proved that this framework enables the investigation of finite-dimensional approximations using both discontinuous and continuous Galerkin methods. Subsequently, Du and co-authors systematically developed the theory of nonlocal vector calculus in 2013 \cite{du2013nonlocal}. They defined nonlocal divergence, gradient and curl operators, derived the corresponding adjoint operators, and established the nonlocal counterparts of several fundamental theorems and identities in classical differential vector calculus, providing indispensable mathematical tools for the subsequent development of nonlocal theory.
	
	Building on this solid theoretical foundation, Tian and Du further investigated the dynamic error propagation mechanism under the dual limits of $\delta\to0$ and $h\to0$. In \cite{Tian2014asymptotically}, Tian and Du established a rigorous abstract mathematical framework for such parameterized problems and proposed the asymptotically compatible schemes. Their work theoretically proved that the Galerkin approximation inherently preserves asymptotic compatibility, as long as the finite element discrete space contains continuous piecewise linear functions. This series of seminal achievements not only thoroughly clarifies the potential risks in the discretization of nonlocal models, but also provides a solid theoretical cornerstone and universal guiding principles for dealing with multiscale nonlocal interactions.
	
	For state-based peridynamics with singular kernels, Trask et al. \cite{trask2019asymptotically} proposed the first meshfree quadrature method proven to satisfy AC. These theoretical advancements emphasize that AC property is a prerequisite for the consistency of nonlocal discrete models in the local limit. Given the superior accuracy of the IPA-AC method in practical applications, a rigorous investigation into its asymptotic compatibility is of paramount necessity to characterize its convergence behavior in the local limit.

To contextualize the IPA-AC method within this theoretical framework, it is instructive to first review the evolution of peridynamic quadrature schemes. In early work, Silling and Askari proposed a scheme, later referred to as the full-area (FA) scheme, in which the entire area of a cell is assigned to the interaction domain whenever the centroid of cell lies inside the horizon, regardless of how the horizon actually cuts the cell\cite{silling2005meshfree}. This simple strategy is easy to implement but introduces significant errors for boundary cells. To alleviate this issue, the LAMMPS algorithm in \cite{parks2008implementing} augments the FA scheme with a distance-dependent scaling factor: for cells whose centers lie inside the horizon but are close to its boundary, the effective area is rescaled by this factor, which partially reduces the geometric error; however, cells whose centers lie outside the horizon but still intersect the interaction domain are not accounted for and thus remain uncorrected. Building on these ideas, the QWJ algorithm was developed in \cite{le2014two}, which adopts a more refined volume correction strategy to adjust the effective area of elements near the boundary. This approach also incorporates elements partially outside the interaction domain into the area correction framework, thereby achieving more stable and faster m-convergence. The PA-AC algorithm in \cite{seleson2014improved} exhaustively enumerates all possible intersection scenarios between the circular horizon and square cells, providing analytical formulas for the exact intersection area in each case. This rigorous geometric treatment effectively reduces geometric errors to machine precision. In the same study,
Seleson proposed the IPA-AC method, which further designates the centroid of the intersecting patch as the integration point, establishing itself as the premier single-point meshfree approach. By combining the computational efficiency of single-point integration with dual corrections for both area and integration point location, this method achieves an optimal balance between high performance and superior numerical accuracy. Owing to these advantages, the present study adopts this dual-correction-based algorithm as its central framework for further analysis.

Although \cite{Pablo2014} provided preliminary convergence observations for the IPA-AC method, the analysis focused primarily on mesh refinement with fixed $\delta$ and relied heavily on numerical results. A systematic theoretical characterization of the method's behavior under other essential scaling limits remains absent. In particular, existing work has not rigorously established how the discretization error varies with the horizon $\delta$ for a fixed cell size $h$, nor has it provided a theoretical proof regarding the method's asymptotic compatibility. Moreover, most prior analyses are restricted to one-dimensional settings, and rigorous convergence results for tensor kernel peridynamic models common in engineering applications are largely unavailable. Motivated by these gaps, this work develops a unified theoretical framework for analyzing the convergence of the IPA-AC method.

In this work, we present error estimates for the IPA-AC method using both the linear decay scalar kernel and the linear elastic tensor kernel under the three classes of scaling limits. Our results indicate that:\\
(1)When $\delta$ is fixed, the IPA-AC method exhibits a second-order convergence rate with respect to $h$.\\
(2)When $h$ is fixed, the error increases at a rate of  $\mathcal{O}(\delta^{-2})$  as $\delta$ decreases, demonstrating the influence of the nonlocal length scale.\\
(3)In the concurrent limit where the ratio $\delta/h$ is held constant, the error remains at the $\mathcal{O}(1)$ order and does not decrease as the scale reduces. This implies that the IPA-AC method does not possess the asymptotic compatibility property.\\
These analytical results indicate that, if the ratio between the nonlocal horizon and the cell size is not properly considered, the numerical method may diverge when approximating the local linear elastic theory, thereby theoretically highlighting the importance of the AC framework in the design of nonlocal numerical methods.

The remainder of this paper is organized as follows. \cref{Sec:Predynamic} introduces the governing equations of the bond-based peridynamic model, defining both the scalar and tensor kernels used in this study. \cref{Sec:IPA-ACalgorithm} details the spatial discretization and the implementation of the IPA-AC algorithm, clarifying the geometric dual-correction strategy. \cref{Sec:Convergence} establishes the core theoretical framework, where the consistency and stability of the method are analyzed to derive rigorous error estimates under different scaling limits. \cref{Sec:Numerical} presents comprehensive numerical experiments to validate the theoretical convergence rates and to assess the asymptotic compatibility of the scheme. Finally, concluding remarks and future perspectives are summarized in \cref{Sec:Conclusion}.

\section{Predynamic Model}\label{Sec:Predynamic}
For a continuous body $\mathcal{B}$, the governing equation of motion in peridynamics is given by
\begin{equation*}
\rho\ddot{\mathbf{u}}(\mathbf{x},t)=
\int_{\mathcal{B}}\mathbf{f}\big(\mathbf{u}(\mathbf{x}',t)
-\mathbf{u}(\mathbf{x},t),\mathbf{x}'-\mathbf{x}\big)dV_{\mathbf{x}'}
+\mathbf{b}(\mathbf{x},t).
\end{equation*}
where $\rho$ denotes the mass density, $\mathbf{b}(\mathbf{x},t)$ is the prescribed body force, and $\mathbf{f}$ represents the pairwise force density acting between material points $\mathbf{x},\mathbf{y}\in\mathcal{B}\subset\mathbb{R}^2$.
For a linear microelastic material, the pairwise force assumes a Hookean structure in which the interaction depends linearly on the relative displacement\cite{silling2005meshfree}. In this case,
\begin{equation*}
	\mathbf{f}(\boldsymbol{\xi},\boldsymbol{\eta},t) = \mathbf{C}(\boldsymbol{\xi})\boldsymbol{\eta}(t),
\end{equation*}
where $\mathbf{C}$ is the material's micromodulus function, and
$$
\boldsymbol{\xi} = \mathbf{x}'-\mathbf{x},\qquad \boldsymbol{\eta}(t) = \mathbf{u}(\mathbf{x}',t)-\mathbf{u}(\mathbf{x},t),
$$
denote, respectively, the relative position in the reference configuration and the corresponding relative displacement.The peridynamic equation then reduces to the linear integral form
\begin{equation*}
\rho\ddot{\mathbf{u}}(\mathbf{x},t)
=\int_{\mathcal{B}}\mathbf{C}(\boldsymbol{\xi})
\left(\mathbf{u}(\mathbf{x}',t)-\mathbf{u}(\mathbf{x},t)\right)dV_{\mathbf{x}'}
+\mathbf{b}(\mathbf{x},t).
\end{equation*}

In the peridynamic framework, which is intrinsically nonlocal, a material point $\mathbf{x}$ is assumed to interact with other points within a finite distance $\delta$, a characteristic length scale known as the \emph{horizon}. Consequently, the set of all points within this range constitutes the \emph{neighborhood} of $\mathbf{x}$, defined as: 
\begin{equation*} 
	\mathcal{H}(\mathbf{x},\delta):=
	\{\mathbf{x'}\in\mathcal{B}:|\mathbf{x'}-\mathbf{x}|\leq\delta\}. 
\end{equation*} 
In the nonlocal framework, while the neighborhood $\mathcal{H}(\mathbf{x}_{i},\delta)$ of an interior point $\mathbf{x}_{i}$ resides entirely within the physical body $\mathcal{B}$, the neighborhoods of points in the boundary's vicinity inevitably intersect the exterior.

\begin{figure}
	\centering
	\includegraphics[width=0.7\linewidth]{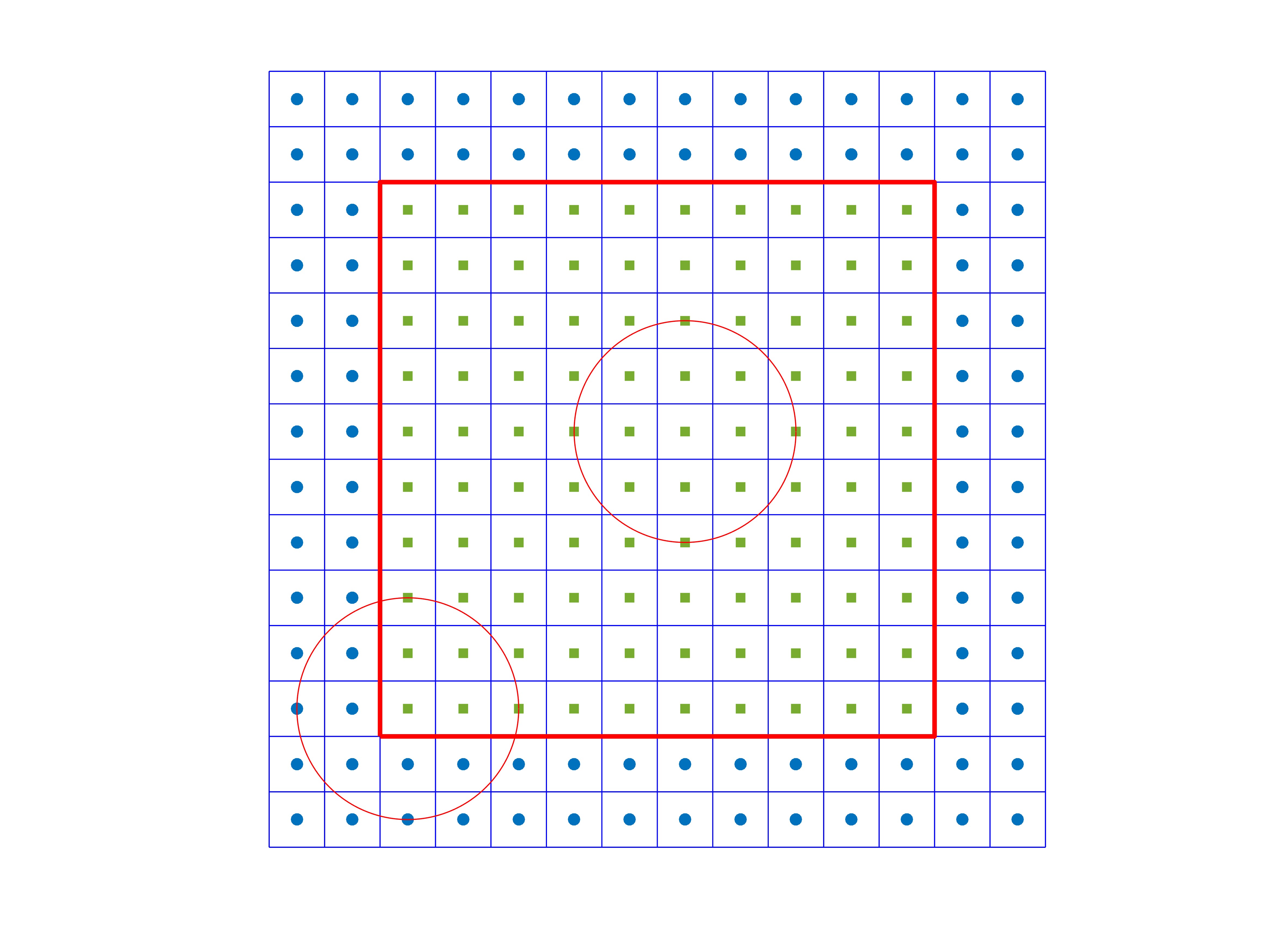}
	\caption{The red border delineates the physical body $\mathcal{B}$ from the surrounding fictitious layer. Note that the neighborhoods of interior nodes reside entirely within $\mathcal{B}$, whereas those of nodes near the boundary partially intersect the fictitious layer.}
	\label{fig:translational_symmetry}
\end{figure}

Since this study primarily focuses on the intrinsic convergence properties of the discrete operator, the physical domain is augmented with a horizon-thick fictitious layer $\hat{\mathcal{B}}$ of width $\delta$ to eliminate complex boundary truncation effects as illustrated in \cref{fig:translational_symmetry}. This extension provides a unified formulation for both interior and boundary-proximate nodes. Dirichlet-type volume constraints are subsequently prescribed over $\hat{\mathcal{B}}$ to facilitate a rigorous error analysis.

Accordingly, for any point $\mathbf{x}$ in the $\mathcal{B}$, the integral governing the pairwise force interactions can be expressed as a nonlocal operator: 
\begin{equation*}
\mathcal{L}_{\delta} \mathbf{u}(\mathbf{x}):= -\int_{\mathcal{H}(\mathbf{x},\delta)}\mathbf{C}(\boldsymbol{\xi})
\left(\mathbf{u}(\mathbf{x}',t)-\mathbf{u}(\mathbf{x},t)\right) dV_{\mathbf{x}'}.
\end{equation*}
Within the nonlocal modeling community, the function $\mathbf{C}(\boldsymbol{\xi})$ is commonly referred to as the \emph{kernel}, which encodes both the range and the intensity of the material interactions.

\subsection{Kernel function}
In this work we examine two classes of kernels that are among the most representative in the peridynamics literature:
(i) scalar kernels, namely compactly supported radial micromoduli with smoothly decaying profiles;
(ii) directional tensor kernels, specifically rank-2 tensor functions encoding component-coupled interactions, hereafter termed tensor kernels.
These two kernel families encode distinct nonlocal interaction structures and modeling assumptions, and it is therefore essential to analyze their convergence behaviors separately.

The original bond-based peridynamic formulation employed a constant micromodulus within the horizon \cite{silling2000}, which remains used due to its simplicity.
However, later numerical studies and engineering implementations introduced smooth, compactly supported scalar kernels to improve quadrature accuracy, mitigate partial-volume effects, and enhance convergence properties \cite{seleson2016convergence}.
In contrast, the tensor kernel, also originating from the bond-based theory \cite{silling2000, silling2007peridynamic}, plays a fundamental theoretical role by enforcing isotropy and recovering classical elasticity in the vanishing-horizon limit.

\subsubsection{Scalar kernel}\label{Subsec:scalarkernel}
The scalar kernel $s(\boldsymbol{\xi})$ considered in this work is a radial, compactly supported micromodulus of the form
\begin{equation}\label{scalar_kernel}
	s(\boldsymbol{\xi}):=
	\begin{cases}
		c_1(\delta)\Bigl(1-\dfrac{|\boldsymbol{\xi}|}{\delta}\Bigr), & |\boldsymbol{\xi}|\le\delta,\\[0.6em]
		0,& |\boldsymbol{\xi}|>\delta,
	\end{cases}
\end{equation}
where $c_{1}(\delta)$ is a normalization constant depending on $\delta$. When
\begin{equation*}
	c_1(\delta)=\frac{20}{\pi\delta^{4}}
\end{equation*}
the kernel satisfies the two fundamental requirements for microelastic materials stated in \cite{silling2003deformation}: (i) the symmetry condition $s(\boldsymbol{\xi})=s(-\boldsymbol{\xi})$, which ensures compliance with Newton's third law;
(ii) the positivity of the energy associated with any sinusoidal deformation, expressed as
\begin{equation*}
	\int_{\mathbb{R}^2} \bigl(1-\cos(\kappa\cdot\boldsymbol{\xi})\bigr)s(\boldsymbol{\xi})\,d\boldsymbol{\xi}>0,\qquad \forall\,\kappa\neq 0.
\end{equation*}
Under this scalar kernel, the peridynamic equation of motion takes the form
\begin{equation*}
	\rho\,\ddot{\mathbf{u}}(\mathbf{x},t) =
	\int_{\mathcal{H}(\mathbf{x},\delta)}s(\boldsymbol{\xi})\bigl(\mathbf{u}(\mathbf{y},t)-\mathbf{u}(\mathbf{x},t)\bigr)\,d\mathbf{y} + \mathbf{b}(\mathbf{x},t).
\end{equation*}
The associated steady-state problem is given by
\begin{equation*}
	\mathcal{L}_{1,\delta} \mathbf{u}(\mathbf{x}) := -\int_{\mathcal{H}(\mathbf{x},\delta)}s(\boldsymbol{\xi})\bigl(\mathbf{u}(\mathbf{y})-\mathbf{u}(\mathbf{x})\bigr)\,d\mathbf{y}.
\end{equation*}
\subsubsection{Tensor kernel}\label{Subsec:tensorkernel}
In the realm of engineering applications, the tensor peridynamic model is more extensively employed. Within this framework, the pairwise interaction force density vector acting between two material points $\mathbf{x}$ and $\mathbf{y}$ is formulated as 
\begin{equation*} 
	\mathbf{f}(\mathbf{x},\mathbf{y},t) = c_{2}(\delta)\frac{\boldsymbol{\xi}\otimes\boldsymbol{\xi}}{|\boldsymbol{\xi}|^3}\bigl(\mathbf{u}(\mathbf{y},t)-\mathbf{u}(\mathbf{x},t)\bigr), \qquad \boldsymbol{\xi}=\mathbf{y}-\mathbf{x},
\end{equation*} 
where $\boldsymbol{\xi}\otimes\boldsymbol{\xi}$ denotes the second-order tensor defined by the dyadic product 
\begin{equation*}
	\boldsymbol{\xi}\otimes\boldsymbol{\xi} =
	\begin{pmatrix}
		\xi_1^2 & \xi_1\xi_2\\
		\xi_1\xi_2 & \xi_2^2
	\end{pmatrix},\qquad \|\boldsymbol{\xi}\|=\sqrt{\xi_1^2+\xi_2^2},
\end{equation*}
The coefficient $c_{2}(\delta)$ represents a material parameter. Adopting the standard calibration conventions found in the literature, $c_{2}(\delta)$ is given by 
\begin{equation*}
	c_{2}(\delta)=
	\begin{cases}
		\dfrac{72\kappa}{5\pi\delta^3}, & d=2,\\[0.4em]
		\dfrac{18\kappa}{\pi\delta^4}, & d=3,
	\end{cases}
\end{equation*}
where $\kappa$ is a material-dependent constant.
The tensor kernel is denoted as 
\begin{equation}\label{tensor_kernel}
	\mathbf{T}(\boldsymbol{\xi}):=c_{2}(\delta)\frac{\boldsymbol{\xi}\otimes\boldsymbol{\xi}}{\|\boldsymbol{\xi}\|^3}.
\end{equation}
Incorporating this tensor kernel, the governing equation of motion for linear elastic peridynamics is expressed as
\begin{equation*}
	\rho\,\ddot{\mathbf{u}}(\mathbf{x},t) =
	\int_{\mathcal{H}(\mathbf{x},\delta)}\mathbf{T}(\boldsymbol{\xi})\bigl(\mathbf{u}(\mathbf{y},t)-\mathbf{u}(\mathbf{x},t)\bigr)d\mathbf{y} + b(\mathbf{x},t),
\end{equation*}
and the corresponding steady-state operator is defined as
\begin{equation*}
	\mathcal{L}_{2,\delta} \mathbf{u}(\mathbf{x}) :=-\int_{\mathcal{H}(\mathbf{x},\delta)}\mathbf{T}(\boldsymbol{\xi})\bigl(\mathbf{u}(\mathbf{y})-\mathbf{u}(\mathbf{x})\bigr)d\mathbf{y}.\label{tensor}
\end{equation*}
The tensor kernel model effectively captures the stiffness coupling across different directions, making it a more generalized peridynamic description for two-dimensional linear elasticity problems. For the sake of notational distinction within this work, the steady-state operators associated with the scalar and tensor kernels are denoted by $\mathcal{L}_{1,\delta},\mathcal{L}_{2,\delta}$, respectively. 

\section{Numerical Implementation via IPA-AC Algorithm}\label{Sec:IPA-ACalgorithm}
\subsection{Spatial discretization and nodal representation}
In the framework of meshfree methods, the continuous body $\mathcal{B}$ is discretized into a finite set of material points, called nodes. The set of these nodes is denoted by $\mathcal{N}_{\mathcal{B}}$. For each node $\mathbf{x}_{i}\in\mathcal{N}_{\mathcal{B}}$, we define a representative cell $\tau_{i}\subset(\mathcal{B}\cap\hat{\mathcal{B}})$ of finite measure such that $\mathbf{x}_{i}$ is the centroid of $\tau_{i}$. The properties on $\tau_{i}$ are approximated by the discrete values assigned at $\mathbf{x}_{i}$.

As a meshfree framework, the IPA-AC method does not rely on a topological background mesh for element connectivity. However, each node is associated with a cell. While these cells can theoretically assume arbitrary shapes, standard numerical implementations-including the theoretical derivations presented here-employ a regular discretization using square cells (in 2D). Consequently, the reference configuration is discretized into a collection of these regular cells. The edge length of such a cell is denoted by $h$. For the sake of convention and without ambiguity, we refer to $h$ as the mesh size. It serves as a metric for the discretization resolution, where a smaller $h$ corresponds to a finer representation of the continuum body.

By partitioning the entire physical domain into these disjoint cells, the continuous nonlocal operator acting on $\mathbf{x}_{i}$ can be expressed as a sum of integrals over the individual cells within its interaction neighborhood, which are subsequently approximated using a single-point quadrature rule.
\begin{equation*}
	\begin{aligned}
		&\int_{\mathcal{H}(\mathbf{x}_{i},\delta)}\mathbf{C}(\boldsymbol{\xi})
		\left(\mathbf{u}(\mathbf{x}',t)-\mathbf{u}(\mathbf{x}_{i},t)\right) d\mathbf{x}'\\ 
		&=\sum_{j}\int_{\mathcal{H}(\mathbf{x}_{i},\delta)\cup\tau_{j}}\mathbf{C}(\boldsymbol{\xi})
		\left(\mathbf{u}(\mathbf{x}',t)-\mathbf{u}(\mathbf{x}_{i},t)\right) d\mathbf{x}'\\
		&\approx\sum_{j\in\mathcal{F}_{i}}\mathbf{C}(\boldsymbol{\xi})
		\left(\mathbf{u}(\mathbf{x}_{j},t)-\mathbf{u}(\mathbf{x}_{i},t)\right)\mathcal{A}_{j}^{i}.
	\end{aligned}
\end{equation*}
where $\mathcal{F}_{i}$ denotes the set of neighboring nodes within the horizon of $\mathbf{x}_{i}$, $\mathbf{x}_{j}$ denotes the centroid of cell $\tau_{j}$, and $\mathcal{A}_{j}^{i}$ represents the geometric measure of the intersection between the cell $\tau_{j}$ and the interaction domain $\mathcal{H}(\mathbf{x}_{i},\delta)$.
This nodal representation transforms the continuous governing equations into a discrete system of interactions, providing the basis for the subsequent IPA-AC operator formulation.
\subsection{The IPA-AC algorithm}\label{Subsec:IPA-AC}

The IPA-AC algorithm is an optimization strategy designed to enhance the accuracy of discrete nonlocal interaction operators within meshfree discretization frameworks. Unlike traditional collocation-based meshfree methods that approximate domain integrals using fixed nodal volumes, the IPA-AC algorithm introduces a sophisticated geometric treatment of the discrete interaction domain.

\begin{figure}
	\centering
	\subfigure[FA algorithm: Identification of effective cells (\textasteriskcentered) and excluded cells ($\circ$) based on whether the cell centroid lies within the neighborhood.]
	{\includegraphics[width=0.45\linewidth]{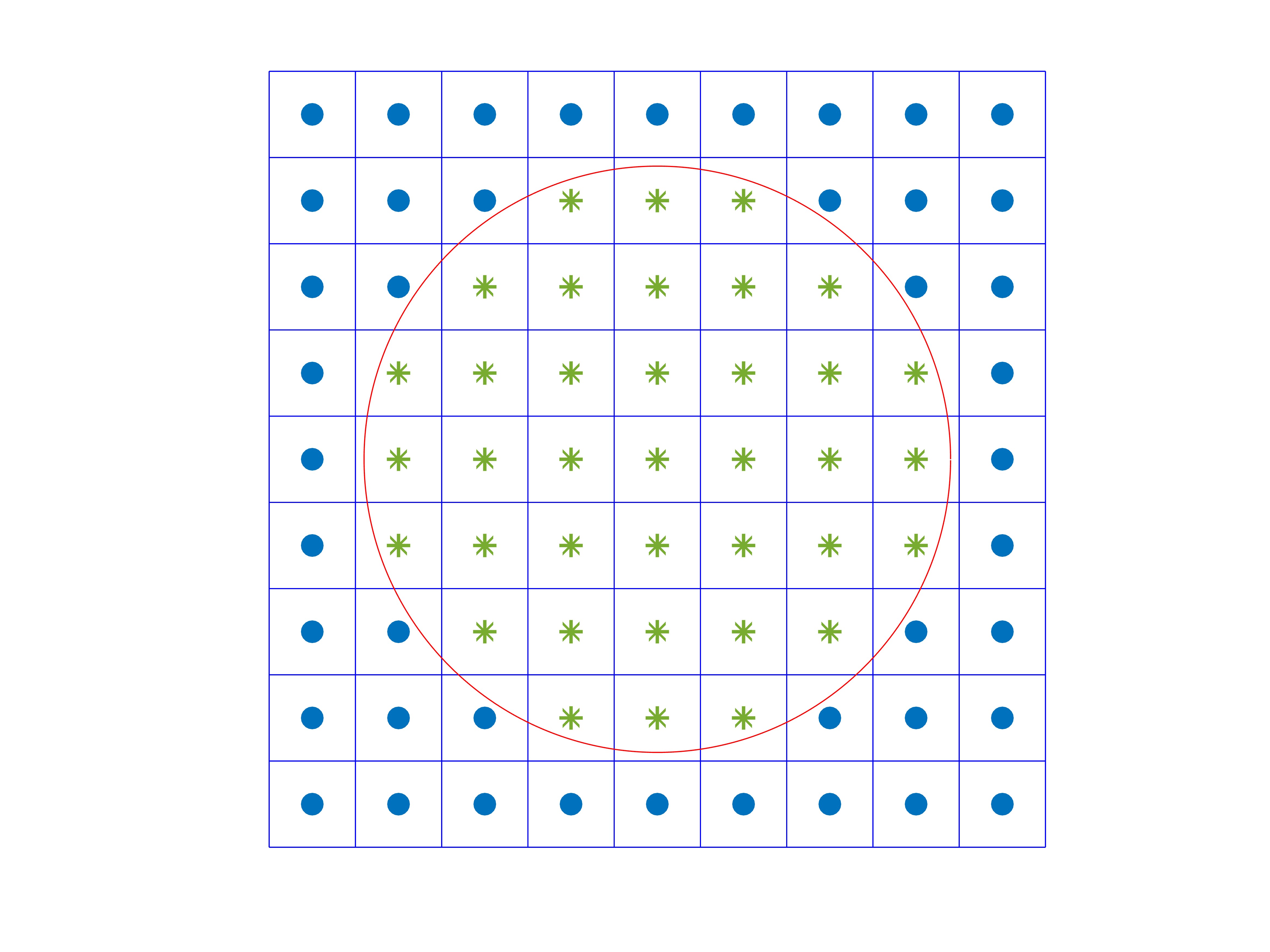}
		\put(-91,66){\tiny$\mathbf{x}_{i}$}	
		\label{fig:1a}}
	\hfill
	\subfigure[PA-AC algorithm: Categorization of material points into full cells ($\square$), standard partial cells ($\lozenge$), boundary partial cells ($\triangle$), and excluded cells ($\circ$).]
	{\includegraphics[width=0.45\linewidth]{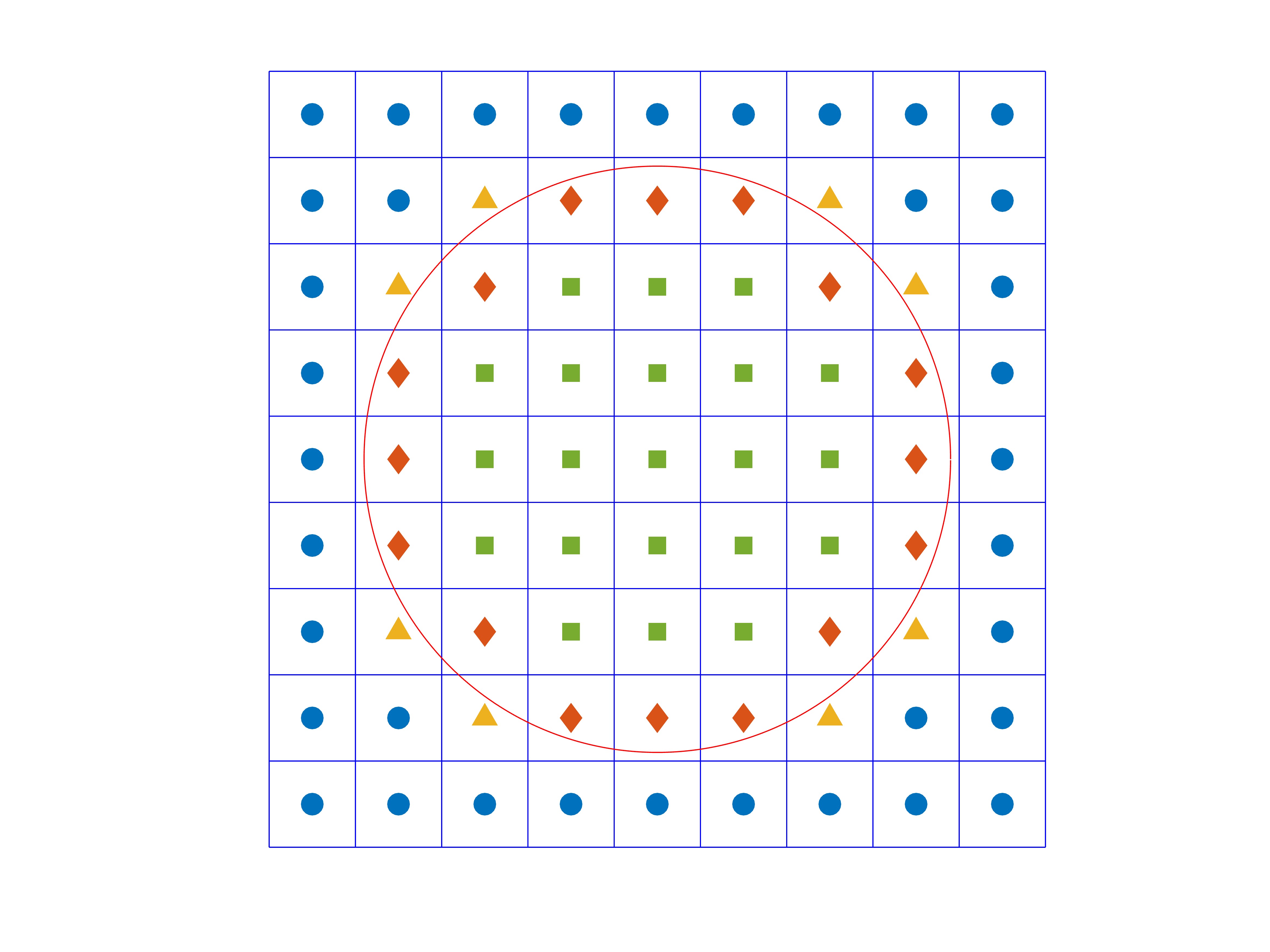}
		\put(-91,66){\tiny$\mathbf{x}_{i}$}
		\label{fig:1b} }
	\caption{Classification of material points within the interaction neighborhood of point $\mathbf{x}_{i}$. The interior of the circle denotes the interaction domain $\mathcal{H}$.}
	\label{class}
\end{figure}

In conventional meshfree discretizations, such as the FA algorithm \cite{silling2005meshfree}, the inclusion of material points is typically governed by a binary criterion: cells are considered effective only if their centroids lie within the neighborhood $\mathcal{H}$, while all others are excluded, as illustrated in \cref{fig:1a}. To overcome the geometric bias inherent in this approach, the IPA-AC algorithm systematically categorizes cells into four distinct types based on their geometric intersection with the neighborhood and the spatial relationship of their centroids relative to the horizon boundary as shown
in \cref{fig:1b}:

Full Cells (Square markers): Cells entirely contained within the horizon, where the effective area equals the full cell area.

Standard Partial Cells (Diamond markers): Cells with centroids inside the horizon but truncated by its boundary. While the FA algorithm and certain implementations like LAMMPS \cite{parks2008implementing} typically employ either the full cell area or simple scaling factors, the IPA-AC method evaluates their exact analytical intersection area.

Boundary Partial Cells (Triangle markers): Cells whose centroids lie outside the horizon yet partially intersect the interaction domain. These cells are frequently neglected in conventional discretizations, which introduces a systematic geometric bias and compromises convergence.

Excluded Cells (Circle markers): Cells situated entirely outside the interaction range with no geometric contribution.

Based on this classification, the IPA-AC algorithm is fundamentally characterized by two core features:

Exact Evaluation of Effective Areas (PA-AC component): The Partial Area-Analytical Calculation (PA-AC) precisely determines the effective area of each cell truncated by the horizon boundary. Crucially, it explicitly incorporates Boundary Partial Cells, ensuring that the geometric measure of the interaction domain is preserved with high fidelity.

Building upon the PA-AC framework, the IPA-AC algorithm minimizes integration errors by shifting the quadrature point from the original cell center to the centroid of the newly computed effective area, as shown in \cref{fig:IPA-AC}. This adjustment ensures the discrete force contributions are strictly consistent with the actual geometry of the nonlocal interaction domain, thereby mitigating surface effects.

\begin{figure}
	\centering
	\includegraphics[width=0.45\linewidth]{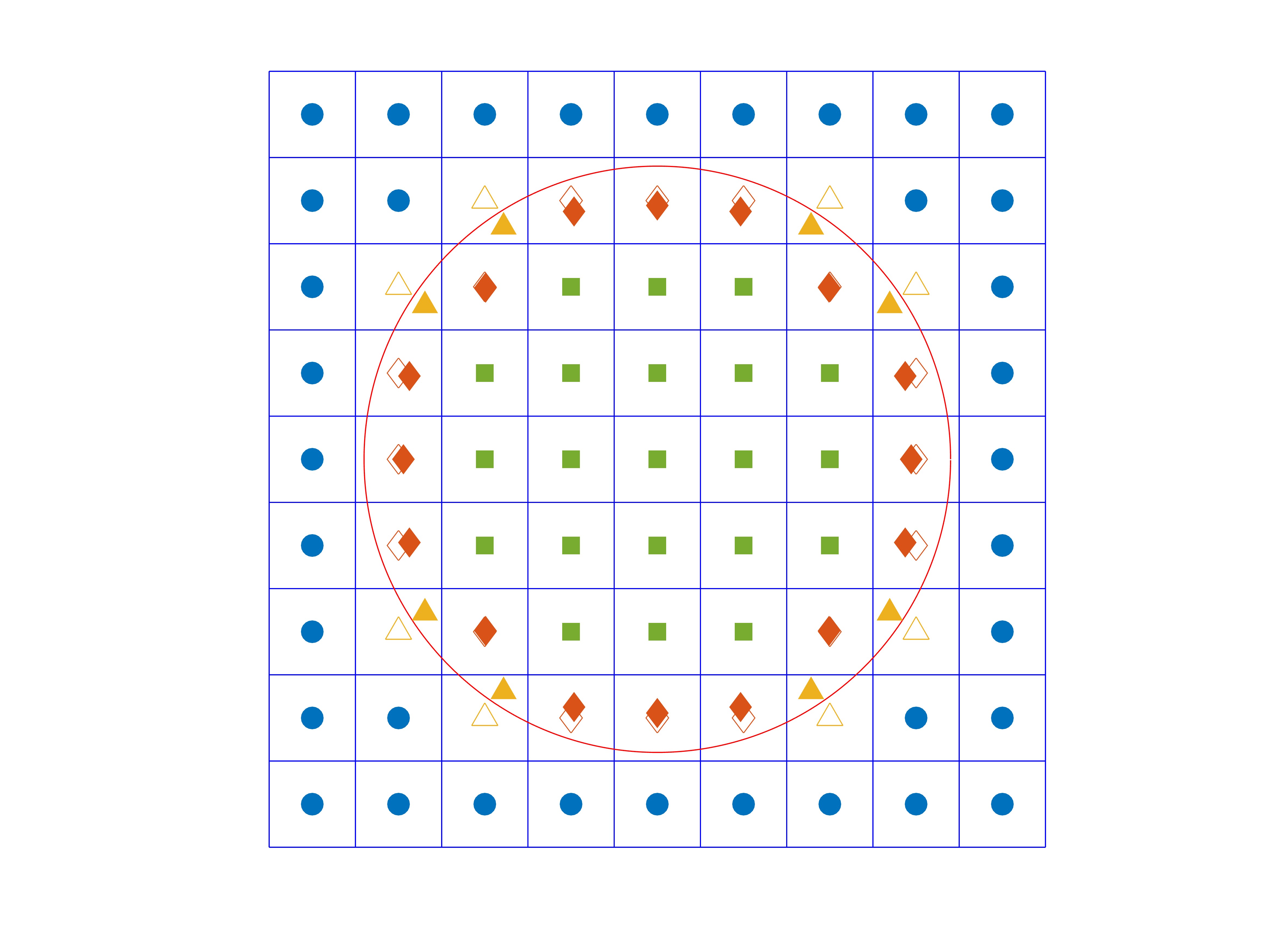}
	\put(-91,66){\tiny$\mathbf{x}_{i}$}
	\caption{IPA-AC algorithm: Shifting quadrature points of partial cells to the centroids of the intersected regions.}
	\label{fig:IPA-AC}
\end{figure}

Compared to standard meshfree schemes, the primary advantage of IPA-AC lies in its ability to maintain the consistency of the discrete operator near the horizon boundary. By explicitly accounting for both the measure and the first moment of the intersected regions, it provides a superior balance between computational efficiency and numerical convergence. As will be shown in the subsequent analysis, this precise geometric treatment of the centroid is the key factor in reducing the local truncation error term derived in \cref{Sec:Convergence}.

\section{Convergence Analysis}\label{Sec:Convergence}
Building upon the geometric corrections introduced by the IPA-AC framework, we now establish a rigorous theoretical characterization of the method’s convergence behavior. Although prior studies \cite{seleson2014improved,seleson2016convergence} have discussed the algorithmic convergence with respect to the mesh size $h$ under a fixed horizon, a comprehensive investigation of the convergence with respect to the horizon $\delta$ and the AC property remains to be supplemented.

To bridge this gap, this section systematically analyzes the error estimates for both scalar and tensor nonlocal operators. In this section, we explicitly derive the convergence orders of the discrete operator with respect to both the horizon size $\delta$ and the grid spacing $h$. These specific error estimates allow us to provide a comprehensive clarification of the three representative convergence regimes discussed in Section 1:(i) Standard refinement; (ii) $\delta$ refinement;(iii) AC property.

\subsection{Error decomposition Framework}\label{sec:Error_decomposition}
To facilitate a unified analysis for both scalar and tensor kernels, we first establish a general error decomposition framework based on the properties of the discrete linear operator. Let $\mathbf{u}_{\delta}$ denote the exact solution of the nonlocal problem and $\mathbf{u}_{\delta}^{h}$ represent the numerical solution obtained via the IPA-AC method. The discrete linear system governing the steady-state deformation is given by
\begin{equation}\label{eq:steady-state}
	\mathcal{L}_{\delta}^{h} \mathbf{u}_{\delta}^{h}(\mathbf{x}_{i}) := -\sum_{j\in\mathcal{F}_{i}}\mathbf{C}(\boldsymbol{\xi})\left(\mathbf{u}_{\delta}^{h}(\mathbf{x}_{j})-\mathbf{u}_{\delta}^{h}(\mathbf{x}_{i})\right)\mathcal{A}_{j}^{i} = \mathbf{b}(\mathbf{x}_{i}), \quad \forall \mathbf{x}_{i} \in {\mathcal{B}},
\end{equation}
 where $\mathcal{L}_{\delta}^{h}$ denotes the discrete operator (encompassing both scalar and tensor formulations) and $\mathbf{b}$ is the body force density. In this study, we employ the infinity norm $\|\cdot\|_{\infty}$ to quantify the discrepancy between the exact and numerical solutions. By invoking the linearity of the operator, the global error bound is derived as follows  
\begin{align}
	\label{eq:errordecomposition} 
	\|\mathbf{u}_{\delta}-\mathbf{u}_{\delta}^{h}\|_{\infty}
	&=\|\mathbf{u}_{\delta}-(\mathcal{L}_{\delta}^{h})^{-1} \mathbf{b}\|_{\infty}
	=\|(\mathcal{L}_{\delta}^{h})^{-1}(\mathcal{L}_{\delta}^{h}\mathbf{u}_{\delta}-\mathbf{b})\|_{\infty} \nonumber\\
	&=\|(\mathcal{L}_{\delta}^{h})^{-1}(\mathcal{L}_{\delta}^{h}\mathbf{u}_{\delta}-\mathcal{L}_{\delta}\mathbf{u}_{\delta})\|_{\infty} \nonumber\\
	&\leq\|(\mathcal{L}_{\delta}^{h})^{-1}\|_{\infty}\|(\mathcal{L}_{\delta}\mathbf{u}_{\delta}-\mathcal{L}_{\delta}^{h}\mathbf{u}_{\delta})\|_{\infty}
\end{align}
This inequality, rooted in the Lax equivalence framework, effectively decouples the convergence analysis into two independent tasks:

Stability Analysis: Proving the uniform boundedness of the inverse operator norm $\|(\mathcal{L}_{\delta}^{h})^{-1}\|_{\infty}$.

Consistency Analysis: Estimating the local truncation error defined as $\|(\mathcal{L}_{\delta}\mathbf{u}_{\delta}-\mathcal{L}_{\delta}^{h}\mathbf{u}_{\delta})\|_{\infty}$.

With the general sufficient conditions for convergence established in \cref{eq:errordecomposition}, we now proceed to verify these conditions for specific kernel formulations. The analysis is structured hierarchically: we first investigate the scalar kernel in \cref{sec:Convergence scalar kernel}. Subsequently, in \cref{sec:Convergence tensor kernel}, we extend this theoretical rigor to the tensor kernel to address the additional complexities introduced by directional dependencies.

To simplify the notation in the following proof, we introduce the shorthand: 
\begin{equation*}
	\mathbf{u}=u_{a},\quad \text{for } a \in \{1,2\}
\end{equation*}
and
\begin{equation*} 
	\partial_{i} = \frac{\partial}{\partial y_{i}}, \quad \partial_{ij}^{2} = \frac{\partial^{2}}{\partial y_{i} \partial y_{j}}, \quad \text{for } i,j \in \{1,2\}.
\end{equation*} 
Additionally, the Einstein summation convention is adopted, implying summation over repeated indices.

\subsection{Convergence analysis of scalar kernel}\label{sec:Convergence scalar kernel}
In this subsection, we specialize the error decomposition framework established in \cref{sec:Error_decomposition} to the scalar kernel model. Specifically, we aim to demonstrate that the discrete scalar kernel operator $\mathcal{L}_{1,\delta}^{h}$ satisfies the two sufficient conditions for convergence: (1) stability is ensured by the structural properties of the resulting stiffness matrix, and (2) consistency is achieved via the IPA-AC geometric corrections.

\subsubsection{Stability analysis of scalar kernel}\label{Subsubsec:Stability analysis of scalar kernel}
The \cref{eq:steady-state} represents the nodal force balance for an arbitrary node $\mathbf{x}_{i}$. The assembly of these equations for all nodes $\mathbf{x}_{i} \in {\mathcal{B}}$ constitutes the global linear system
\begin{equation*} 
	\mathbf{K} \mathbf{U} = \mathbf{F}, 
\end{equation*} 
where $\mathbf{U}$ is the global solution vector and $\mathbf{K}$ is the stiffness matrix associated with the discrete operator $\mathcal{L}_{\delta}^{h}$. Consequently, the stability of the numerical scheme relies on establishing two properties: the non-singularity of $\mathbf{K}$ and the uniform boundedness of its inverse with respect to the mesh size $h$.

\begin{lemma}(Invertibility)\label{Invertibility} Provided that the horizon $\delta$ is sufficiently resolved by the mesh to ensure graph connectivity, the discrete stiffness matrix $\mathbf{K}$ is a non-singular M-matrix. Consequently, the inverse operator $(\mathcal{L}_{\delta}^{h})^{-1}$ exists and is non-negative.
\end{lemma}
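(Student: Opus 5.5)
The plan is to write $\mathbf{K}$ explicitly in the scalar-kernel setting and verify the classical M-matrix criteria: a Z-matrix that is weakly diagonally dominant, irreducible, and strictly dominant in at least one row. For node $\mathbf{x}_i\in\mathcal{N}_{\mathcal{B}}$, \cref{eq:steady-state} gives
\begin{equation*}
K_{ii}=\sum_{j\in\mathcal{F}_i} s(\boldsymbol{\xi}_{ij})\mathcal{A}_j^i,\qquad K_{ij}=-s(\boldsymbol{\xi}_{ij})\mathcal{A}_j^i \ (j\in\mathcal{F}_i,\ \mathbf{x}_j\in\mathcal{B}),
\end{equation*}
and all other entries vanish. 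Here $\boldsymbol{\xi}_{ij}$ is the bond to the (IPA-AC shifted) quadrature point.

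Neighbours $\mathbf{x}_j$ lying in the fictitious layer $\hat{\mathcal{B}}$ carry prescribed Dirichlet volume-constraint values. They contribute to $K_{ii}$, but their terms move to the right-hand side and produce no off-diagonal entries. The key sign fact is that the IPA-AC quadrature point lies in $\tau_j\cap\mathcal{H}(\mathbf{x}_i,\delta)$, so $|\boldsymbol{\xi}_{ij}|\le\delta$. By \cref{scalar_kernel} this gives $s(\boldsymbol{\xi}_{ij})\ge0$, and $\mathcal{A}_j^i\ge0$. Hence every off-diagonal entry is nonpositive, so $\mathbf{K}$ is a Z-matrix, and
\begin{equation*}
K_{ii}-\sum_{j\neq i}|K_{ij}|=\sum_{j\in\mathcal{F}_i,\ \mathbf{x}_j\in\hat{\mathcal{B}}} s(\boldsymbol{\xi}_{ij})\mathcal{A}_j^i\ge0 .
\end{equation*}
Thus $\mathbf{K}$ is weakly row diagonally dominant. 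The inequality is strict for every node whose horizon reaches $\hat{\mathcal{B}}$ with a positive-weight cell, and since the layer has width $\delta$, such nodes exist near $\partial\mathcal{B}$.

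Next I use the resolution hypothesis to prove irreducibility. Assume $\delta$ exceeds a fixed multiple of $h$, say $\delta>\sqrt2\,h$. Then each node's horizon contains the centroids of its axis-neighbour cells at distance $h$. For these cells $s(\boldsymbol{\xi}_{ij})\ge c_1(\delta)\bigl(1-|\boldsymbol{\xi}_{ij}|/\delta\bigr)>0$ and $\mathcal{A}_j^i>0$. The graph of $\mathbf{K}$ therefore contains the grid graph of the square cells in $\mathcal{B}$, which is connected, so $\mathbf{K}$ is irreducible. An irreducible, weakly diagonally dominant matrix with at least one strictly dominant row is irreducibly diagonally dominant, hence nonsingular by Taussky's theorem. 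Since its diagonal is positive, the same criterion (Varga, Berman--Plemmons) makes it a nonsingular M-matrix with $\mathbf{K}^{-1}\ge0$ entrywise, indeed $>0$ by irreducibility. This yields existence and nonnegativity of $(\mathcal{L}_\delta^h)^{-1}$.

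The main obstacle is the positivity step for the weights near the horizon edge. For boundary partial cells the shifted centroid lies strictly inside $\mathcal{H}$, but can come arbitrarily close to $|\boldsymbol{\xi}|=\delta$, where $s$ vanishes. This does not hurt the sign structure; it only affects which bonds count as edges. I will therefore take the connectivity edges only from full cells at distance $h$, whose weights are bounded below explicitly, and make the meaning of ``sufficiently resolved'' precise as $\delta/h$ above such a threshold.
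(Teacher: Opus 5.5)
Your proposal is correct and follows essentially the same route as the paper's proof. Both arguments use nonpositive off-diagonal entries coming from $s\ge 0$, weak row dominance with strict dominance in the rows whose horizon meets the Dirichlet layer $\hat{\mathcal{B}}$, irreducibility from connectivity of the interaction graph, and then the standard criterion for irreducibly diagonally dominant Z-matrices; you simply justify the sign of the IPA-AC weights and the connectivity threshold more carefully than the paper does.

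One small slip: the axis-neighbour cells at distance $h$ are full cells only when $\delta\ge\frac{\sqrt{10}}{2}h$, not merely when $\delta>\sqrt{2}h$. Either raise your threshold, or note that the shifted quadrature point is the centroid of the convex set $\tau_j\cap\mathcal{H}(\mathbf{x}_i,\delta)$ with nonempty interior. It therefore lies strictly inside the horizon, so $s(\boldsymbol{\xi}_{ij})>0$ for every cell with $\mathcal{A}_j^i>0$, and the paper's assumption $\delta>h$ already suffices for connectivity.
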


\begin{proof} We verify that $\mathbf{K}$  satisfies the structural properties of an M-matrix based on the non-negativity of the scalar kernel $s(\boldsymbol{\xi})$:
	
	Off-diagonal Non-positivity: For $i\neq j$, the entries are $\mathbf{K}_{ij}=-\mathbf{S}(\mathbf{x}_{j}-\mathbf{x}_{i})\mathcal{A}_{j}^{i}\leq0$.
	
	Weak Diagonal Dominance for Internal Nodes: The diagonal entries satisfy $|\mathbf{K}_{ii}|=\sum_{j\neq i}|\mathbf{K}_{ij}|>0$.
	
	Strict Diagonal Dominance at Boundary: For nodes located within a distance $\delta$ from the boundary, the interaction domain intersects with the fictitious layer where Dirichlet constraints are imposed. Since the corresponding off-diagonal entries are moved to the right-hand side, we have strict inequality: $|\mathbf{K}_{ii}|>\sum_{j\neq i}|\mathbf{K}_{ij}|$.
	
	Since the physical domain is connected and the horizon overlaps sufficiently ($\delta>h$), the matrix $\mathbf{K}$ is irreducible. Standard matrix theory states that an irreducible, diagonally dominant matrix with non-positive off-diagonals and at least one strictly dominant row is non-singular, and its inverse consists of non-negative entries ($\mathbf{K}^{-1}>0$). 
\end{proof}

Furthermore, assuming the physical domain and the mesh cells are regular, the resulting stiffness matrix $\mathbf{K}$ is symmetric. Consequently, under these conditions, $\mathbf{K}$ is a Stieltjes matrix (a symmetric M-matrix), which possesses real and positive eigenvalues.

\begin{lemma}\label{Uniform Boundedness of Scalar kernel}
	(Uniform Boundedness of Scalar kernel) The inverse of the discrete operator is uniformly bounded independent of the mesh size $h$ and horizon size $\delta$. That is, there exists a constant $C_{s1}$ independent of the mesh size $h$ such that: 
	\begin{equation} 
		\lVert(\mathcal{L}_{\delta}^{h})^{-1}\rVert_{\infty} \leq C_{s1}. 
	\end{equation}
\end{lemma}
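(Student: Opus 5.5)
The plan is a discrete maximum principle with a barrier (comparison) function. By \cref{Invertibility}, $\mathbf{K}$ is a non-singular M-matrix with $\mathbf{K}^{-1}\ge 0$. Hence for any grid function $\mathbf{w}$ with $\mathcal{L}_{\delta}^{h}\mathbf{w}(\mathbf{x}_i)\ge 1$ at every node of $\mathcal{B}$, and $\mathbf{w}\ge 0$ on the fictitious layer $\hat{\mathcal{B}}$, we have
\begin{equation*}
\|(\mathcal{L}_{\delta}^{h})^{-1}\|_{\infty}=\|\mathbf{K}^{-1}\mathbf{1}\|_{\infty}\le \|\mathbf{w}\|_{\infty}.
\end{equation*}
This holds because $\mathbf{K}^{-1}$ is entrywise non-negative, so its $\infty$-norm is attained on the all-ones vector, and $\mathbf{K}(\mathbf{w}-\mathbf{K}^{-1}\mathbf{1})\ge 0$ componentwise. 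The task therefore reduces to exhibiting a barrier $\mathbf{w}$ whose sup-norm is bounded independently of $h$ and $\delta$.

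The natural candidate is the quadratic $w(\mathbf{x})=\alpha\,(R^2-|\mathbf{x}-\mathbf{x}_0|^2)$. Here $\mathbf{x}_0$ is a fixed point and $R$ is chosen so that $w\ge 0$ on $\mathcal{B}\cup\hat{\mathcal{B}}$; $R$ depends only on the diameter of the enlarged domain, which is bounded for $\delta\le\delta_0$.

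The key computation is the discrete operator applied to a quadratic. We have
\begin{equation*}
-\mathcal{L}_{\delta}^{h}w(\mathbf{x}_i)=\sum_{j}s(\boldsymbol{\xi}_j)\,\bigl(w(\mathbf{x}_j)-w(\mathbf{x}_i)\bigr)\mathcal{A}_j^i ,
\end{equation*}
where $\mathbf{x}_j$ is the IPA-AC quadrature point. Expanding $w(\mathbf{x}_j)-w(\mathbf{x}_i)=-\alpha\bigl(2(\mathbf{x}_i-\mathbf{x}_0)\cdot\boldsymbol{\xi}_j+|\boldsymbol{\xi}_j|^2\bigr)$ gives two contributions.
\begin{itemize}
\item The linear (first-moment) term $\sum_j s(\boldsymbol{\xi}_j)\boldsymbol{\xi}_j\mathcal{A}_j^i$ vanishes. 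The uniform grid, together with the symmetric treatment of partial cells (the intersection areas and centroids are point-symmetric about $\mathbf{x}_i$), makes this sum odd under $\boldsymbol{\xi}\mapsto-\boldsymbol{\xi}$.
\item The quadratic term leaves $\mathcal{L}_{\delta}^{h}w(\mathbf{x}_i)=\alpha\, M_h(\mathbf{x}_i)$, with discrete second moment $M_h=\sum_j s(\boldsymbol{\xi}_j)|\boldsymbol{\xi}_j|^2\mathcal{A}_j^i$.
\end{itemize}
The continuous analogue is $\int_{|\xi|\le\delta}s|\xi|^2=\frac{20}{\pi\delta^4}\cdot 2\pi\frac{\delta^4}{20}=2$, independent of $\delta$; this is exactly why $c_1(\delta)$ is normalized as it is. Choosing $\alpha=1/\min_i M_h(\mathbf{x}_i)$ then gives $\|\mathbf{w}\|_\infty\le \alpha R^2$, which is the constant $C_{s1}$.

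The main obstacle is showing that the discrete second moment $M_h$ is bounded below by a positive constant uniformly in $h$ and $\delta$. This includes the coarse regime where $\delta/h$ is only moderately larger than $1$, which matters for the $\delta$-convergence and AC discussion. My first approach would be a direct lower bound. Discard all cells except the full and partial cells in the annulus $\delta/4\le|\boldsymbol{\xi}|\le 3\delta/4$. On that annulus, $s|\boldsymbol{\xi}|^2\ge c\,\delta^{-2}$. Because IPA-AC computes exact intersection areas, the total area $\sum\mathcal{A}_j^i$ captured there is essentially the annulus area $\sim\delta^2$; for $\delta>h$ under the connectivity assumption, at least a fixed fraction of it survives. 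Together these give $M_h\ge c'>0$. Where this geometric counting is delicate, for $\delta/h$ near $1$, I would fall back on assuming $\delta/h\ge m_0$ for a fixed $m_0$ and noting that $M_h\to 2$ as $h/\delta\to 0$ by quadrature consistency.

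Boundary nodes only help. Nodes whose neighbours lie in $\hat{\mathcal{B}}$ have those terms moved to the right-hand side, so the row inequality $\mathbf{K}\mathbf{w}\ge\mathbf{1}$ is preserved, since $w\ge 0$ there.
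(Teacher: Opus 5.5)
Your proposal is correct and follows essentially the paper's argument. Both use a quadratic test function, the entrywise non-negativity of $\mathbf{K}^{-1}$ from \cref{Invertibility}, cancellation of the first-moment term by the point symmetry of the neighbourhood, and the normalization $c_1(\delta)=20/(\pi\delta^4)$, which makes the remaining discrete second moment an $\mathcal{O}(1)$ constant. Your one-sided barrier formulation ($\mathbf{K}\mathbf{w}\ge\mathbf{1}$ with $w\ge 0$ on $\hat{\mathcal{B}}$) and your explicit lower bound on $M_h$ are tighter than the paper's versions of those two steps: the paper's $v$ does not vanish on $\hat{\mathcal{B}}$ and is negative near its corners, so the exact identity $\mathbf{K}\mathbf{v}_h=\mathbf{1}$ fails at near-boundary rows, and its intermediate-value argument with $\beta\in(0,4/27)$ does not by itself keep $C_{\delta,h}$ away from zero.
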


\begin{proof}
	Let $\mathbf{v}_{h}$ ​	
	be a vector satisfying the equation 
	\begin{equation*}
		\mathcal{L}_{\delta}^{h}\mathbf{v}_{h} = \boldsymbol{1}.
	\end{equation*}
	By virtue of the non-negativity of the inverse operator established in \cref{Invertibility}, the infinity norm of $(\mathcal{L}_{\delta}^{h})^{-1}$ is determined by its action on the unit vector: 
	\begin{equation*} 
		\lVert(\mathcal{L}_{\delta}^{h})^{-1}\rVert_{\infty}
		=\lVert(\mathcal{L}_{\delta}^{h})^{-1}\boldsymbol{1}\rVert_{\infty} 
		=\lVert\mathbf{v}_{h}\rVert_{\infty}. 
	\end{equation*} 
	Consequently, proving the stability of the scheme is equivalent to demonstrating that $\lVert\mathbf{v}_{h}\rVert_{\infty}$ is bounded. The problem thus reduces to constructing a function $\mathbf{v}_{h}$ that satisfies:
	$\mathcal{L}_{\delta}^{h}\mathbf{v}_{h} = \boldsymbol{1}$,
	$\lVert\mathbf{v}_{h}\rVert_{\infty}\leq C$
	for some constant $C$ independent of $h$.
	To prove the uniform boundedness of the inverse operator, we construct an explicit auxiliary function. Let the scalar function $v(\mathbf{x}),~a=\{1,2\}$ be a quadratic polynomial.
	$$v(\mathbf{x})=\frac{x_{1}(1-x_{1})}{2} 
	+\frac{x_{2}(1-x_{2})}{2}.$$
	We evaluate the action of the discrete operator $\mathcal{L}_{\delta}^{h}$ on $v(\mathbf{x}_{i})$.
	Exploiting the geometric symmetry of the uniform grid, we rewrite the summation over the full neighborhood as a summation over the 'half-neighborhood'. Let $\mathcal{F}_{i}^{+}$​	denote the set of positive integer offsets, defined as
	$$\mathcal{F}_{i}^{+} :=
	\{m\in Z^{+}|\mathbf{x}_{i+m}\in\mathcal{F}_{i}\}.$$
	For every $m\in\mathcal{F}_{i}^{+}$, there exists a corresponding neighbor $\mathbf{x}_{i-m}$ symmetrically located with respect to $\mathbf{x}_{i}$ as shown in \cref{fig:symmetric}. Consequently, the operator can be reformulated by pairing these symmetric interactions
	
	\begin{figure}
		\centering
		\begin{tikzpicture}
			\node[anchor=south west] (image) at (0,0) {\includegraphics[width=0.6\textwidth]{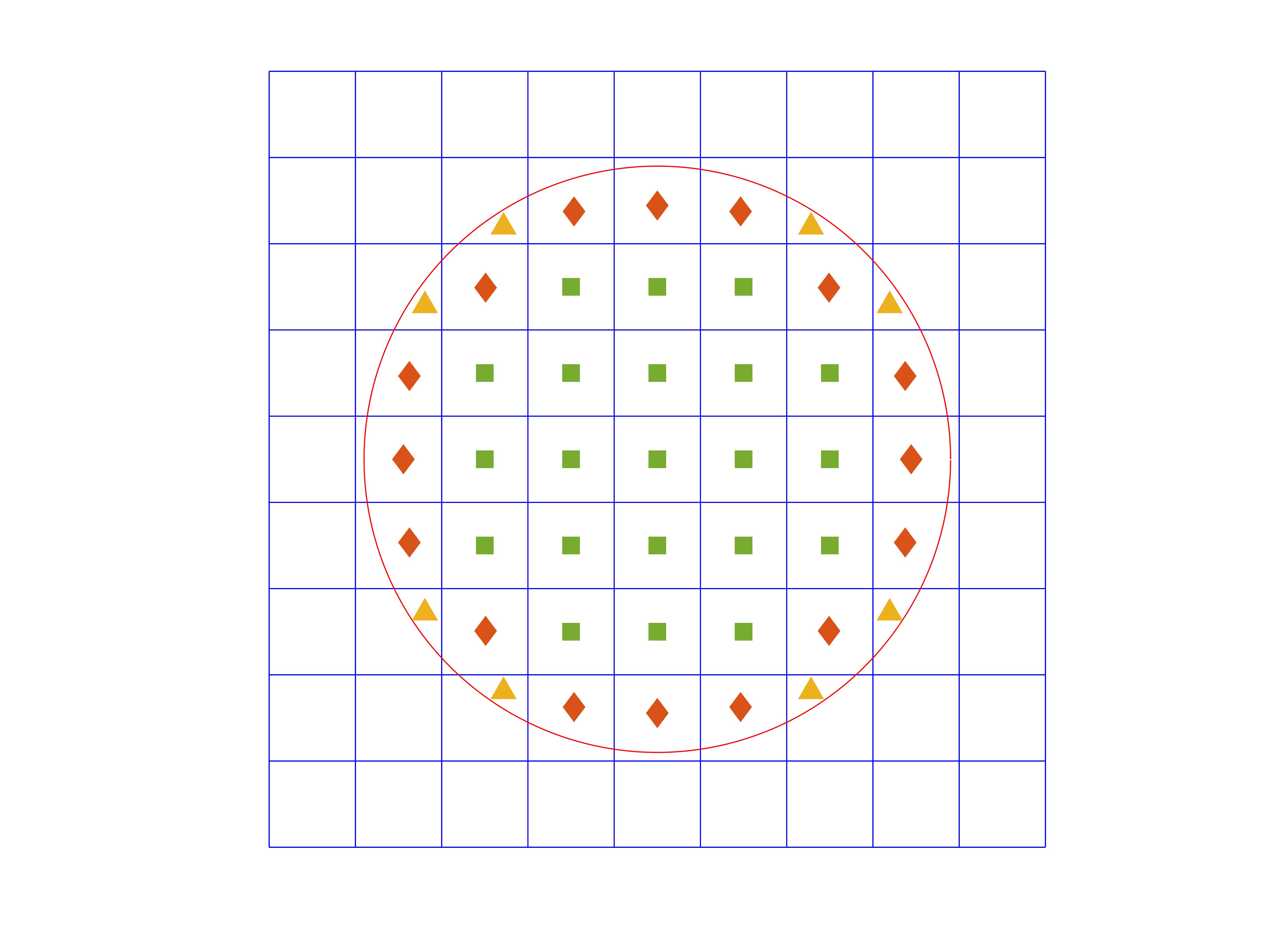}};
			\draw[->, line width=1pt] (4.65,3.5) -- (3.48,4.66);  
			\draw[->, line width=1pt] (4.65,3.5) -- (5.81,2.36);
			\node[above] at (3.9,3.65){\textbf{$\mathbf{a}_{m}$}};
			\node[above] at (5,2.4){\textbf{$-\mathbf{a}_{m}$}};
			\node[above] at (4.86,3.3){\textbf{$\mathbf{x}_{i}$}};
			\node[above] at (6,1.86){\textbf{$\mathbf{x}_{i-m}$}};
			\node[above] at (3.72,4.56){\textbf{$\mathbf{x}_{i-m}$}};
		\end{tikzpicture}
		\caption{Geometric symmetry of the neighborhood of node $\mathbf{x}_{i}$. Each point $\mathbf{x}_{i+m}$ admits a symmetric counterpart $\mathbf{x}_{i-m}$ with an identical cell area. The two vectors formed by each symmetric pair and the source point $\mathbf{x}_{i}$ have equal magnitudes and opposite directions.}
		\label{fig:symmetric}	
	\end{figure}
	$$\begin{aligned}
		\mathcal{L}_{\delta}^{h}v(\mathbf{x}_{i})
		&=-c_{1}(\delta)\sum_{p\in\mathcal{F}_{i}}\left(v(\mathbf{x}_{p})-v(\mathbf{x}_{i})\right)\left(1-\frac{|\mathbf{x}_{p}-\mathbf{x}_{i}|}{\delta}\right)\mathcal{A}_{p}^{i}\\
		&=c_{1}(\delta)\sum_{m\in\mathcal{F}_{i}^{+}}\left(-v(\mathbf{x}_{i}-\mathbf{a}_{m})+2v(\mathbf{x}_{i})-v(\mathbf{x}_{i}+\mathbf{a}_{m})\right)\left(1-\frac{|\mathbf{a}_{m}|}{\delta}\right)\mathcal{A}_{i+m}^{i},
	\end{aligned}$$
	where $\mathbf{a}_{m}=\mathbf{x}_{i+m}-\mathbf{x}_{i}$ denotes the relative position vector of the neighbor. For the chosen quadratic polynomial, the second-order central difference yields an exact analytical expression. Substituting $v(\mathbf{x})$ into the bracketed term, we get
	$$\begin{aligned}
		&-v(\mathbf{x}_{i}-\mathbf{a}_{m})+2v(\mathbf{x}_{i})-v(\mathbf{x}_{i}+\mathbf{a}_{m})\\
		&=-\frac{1}{2}\left[(x_{i,1}-a_{m,1})(1-x_{i,1}+a_{m,1})+(x_{i,2}-a_{m,2})(1-x_{i,2}+a_{m,2})\right]\\
		&+x_{i,1}(1-x_{i,1})+x_{i,2}(1-x_{i,2})\\
		&-\frac{1}{2}\left[(x_{i,1}+a_{m,1})(1-x_{i,1}-a_{m,1})+(x_{i,2}+a_{m,2})(1-x_{i,2}-a_{m,2})\right]\\
		&=a_{m,1}^{2}+a_{m,2}^{2}\\
		&=|\mathbf{a}_{m}|^{2}.
	\end{aligned}$$
	It is evident that the second-order difference of the quadratic function depends only on the square of the relative distance $|\mathbf{a}_{m}|$ and is independent of the center $\mathbf{x}_{i}$. Substituting this result back into the operator expression yields a constant $C_{\delta,h}$,
	$$\begin{aligned}
		\mathcal{L}_{\delta}^{h}v(\mathbf{x}_{i})=c_{1}(\delta)\sum_{m\in\mathcal{F}_{i}^{+}}|\mathbf{a}_{m}|^{2}\left(1-\frac{|\mathbf{a}_{m}|}{\delta}\right)\mathcal{A}_{i+m}^{i}=C_{\delta,h}.
	\end{aligned}$$
	To satisfy the condition $\mathcal{L}_{\delta}^{h}\mathbf{v}_{h} = \boldsymbol{1}$, we define the normalized vector field
	\begin{equation}
		\mathbf{v}_{h}=\frac{1}{C_{\delta,h}}
		\left(\begin{array}{c}
		v(\mathbf{x})\\v(\mathbf{x})
		\end{array}\right).\label{Vh}
	\end{equation}
	Given that $v(\mathbf{x})$ is bounded on the unit domain with a maximum value of $\frac{1}{4}$, the infinity norm is given by $\lVert\mathbf{v}_{h}\rVert_{\infty} = \frac{1}{4C_{\delta,h}}$.
	
	We now establish that $C_{\delta,h}$ is a positive constant strictly bounded away from zero, independent of $h$. The sum can be estimated as the product of the effective area $\pi\delta^{2}$ and the integrand evaluated at some intermediate point $\alpha\delta$ where $\alpha\in(0,1)$
	$$\begin{aligned}
		C_{\delta,h}&=c_{1}(\delta)\sum_{m}|\mathbf{a}_{m}|^{2}\left(1-\frac{|\mathbf{a}_{m}|}{\delta}\right)\mathcal{A}_{i+m}^{i}\\
		&=c_{1}(\delta)\rho(\alpha)\pi\delta^{2},
	\end{aligned}$$
	where $\rho(\alpha)=(\alpha\delta)^{2}\left(1-\frac{\alpha\delta}{\delta}\right)=\alpha^{2}(1-\alpha)\delta^{2}$. Let $\beta=\alpha^{2}(1-\alpha)$, $\rho(\alpha)=\rho(\beta)=\beta\delta^{2}$. Using the scaling coefficient for the 2D kernel, $c_{1}(\delta)=\frac{20}{\pi\delta^{4}}$, we obtain
	$$C_{\delta,h}=c_{1}(\delta)\beta\pi\delta^{4}=\frac{20}{\pi\delta^{4}}\beta\pi\delta^{4}=20\beta.$$
	Since the kernel is non-negative and supported on a set of non-zero measure, $\beta$ represents a strictly positive value bounded away from zero (specifically, $\beta\in(0,4/27)$). Consequently, $C_{\delta,h}$ is a constant independent of the discretization parameters, ensuring that $\lVert\mathbf{V}_{h}\rVert_{\infty}$ remains uniformly bounded.
\end{proof}

\subsubsection{Consistency analysis of scalar kernel}\label{Subsubsec:Consistency analysis of scalar kernel}
We first estimate the local truncation error for the scalar operator. The key innovation of the IPA-AC algorithm lies in its precise handling of the integration weights and quadrature points, which directly impacts the accuracy of the discrete approximation.

\begin{lemma}\label{Consistency of Scalar Operator}
(Consistency of Scalar Operator) Assume that the exact solution $\mathbf{u}\in C^{2}(\mathcal{B})$ and the scalar kernel $s(\boldsymbol{\xi})$ defined as
\cref{Subsec:scalarkernel}. Then, the local truncation error of the IPA-AC discrete operator is uniformly bounded by: 
\begin{equation*} 		\|\mathcal{L}_{1,\delta}\mathbf{u} - \mathcal{L}_{1,\delta}^{h}\mathbf{u}\|_{\infty} 
\leq C_{s2} \frac{h^{2}}{\delta^{2}},
\end{equation*} 
where $C_{s2}$ is a generic constant independent of the mesh size $h$ and the horizon $\delta$.
\end{lemma}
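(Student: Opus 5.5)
We decompose the truncation error cell by cell. Fix a node $\mathbf{x}_i$ and write $g(\mathbf{y}) := s(\mathbf{y}-\mathbf{x}_i)\bigl(\mathbf{u}(\mathbf{y})-\mathbf{u}(\mathbf{x}_i)\bigr)$. Then
\begin{equation*}
\mathcal{L}_{1,\delta}\mathbf{u}(\mathbf{x}_i)-\mathcal{L}_{1,\delta}^{h}\mathbf{u}(\mathbf{x}_i)
=-\sum_{j}\Bigl(\int_{\Omega_j^i} g(\mathbf{y})\,d\mathbf{y}-g(\bar{\mathbf{x}}_j^i)\,\mathcal{A}_j^i\Bigr),
\end{equation*}
where $\Omega_j^i=\tau_j\cap\mathcal{H}(\mathbf{x}_i,\delta)$. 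Here $\mathcal{A}_j^i=|\Omega_j^i|$ is exact by the PA-AC component, and $\bar{\mathbf{x}}_j^i$ is the centroid of $\Omega_j^i$, which is the IPA-AC quadrature point (equal to $\mathbf{x}_j$ for full cells).

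On each $\Omega_j^i$ we Taylor expand $g$ about $\bar{\mathbf{x}}_j^i$. The zeroth-order term is integrated exactly because the area is exact. The first-order term $\nabla g(\bar{\mathbf{x}}_j^i)\cdot(\mathbf{y}-\bar{\mathbf{x}}_j^i)$ integrates to zero because $\bar{\mathbf{x}}_j^i$ is the centroid. This is exactly where the dual correction is used, and it holds for full, standard partial and boundary partial cells alike. The remainder is bounded by $\tfrac12\sup_{\Omega_j^i}|\partial^2_{kl} g|\,(\mathrm{diam}\,\Omega_j^i)^2\,\mathcal{A}_j^i\le C h^2\,\mathcal{A}_j^i\sup|\partial^2 g|$. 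Summing over cells, whose effective areas add up to $\pi\delta^2$, gives
\begin{equation*}
|\mathcal{L}_{1,\delta}\mathbf{u}(\mathbf{x}_i)-\mathcal{L}_{1,\delta}^{h}\mathbf{u}(\mathbf{x}_i)|\le C h^2\int_{\mathcal{H}(\mathbf{x}_i,\delta)}\sup_{\text{cell}}|D^2 g|\,d\mathbf{y}
\end{equation*}
up to harmless cellwise maxima.

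The main step is bounding $D^2 g$ with the correct $\delta$-scaling. By the product rule,
\begin{equation*}
D^2 g = D^2 s\,(\mathbf{u}(\mathbf{y})-\mathbf{u}(\mathbf{x}_i)) + 2\,Ds\otimes D\mathbf{u} + s\,D^2\mathbf{u}.
\end{equation*}
With $c_1=20/(\pi\delta^4)$ we have $|s|\lesssim\delta^{-4}$ and $|\nabla s|\lesssim\delta^{-5}$. Also $|\mathbf{u}(\mathbf{y})-\mathbf{u}(\mathbf{x}_i)|\le\|\nabla\mathbf{u}\|_\infty|\mathbf{y}-\mathbf{x}_i|\le C\delta$. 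Integrated over an area of order $\delta^2$, the second and third terms therefore give $h^2(\delta^{-3}+\delta^{-2})$.

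The obstacle is that the linear kernel $1-|\boldsymbol{\xi}|/\delta$ is not $C^2$. It has a cone singularity at $\boldsymbol{\xi}=0$, where $D^2 s\sim c_1/(\delta|\boldsymbol{\xi}|)$, and a kink on $|\boldsymbol{\xi}|=\delta$. We handle this in three parts:
\begin{itemize}
\item In the generic region, $|D^2 s|\,|\mathbf{u}(\mathbf{y})-\mathbf{u}(\mathbf{x}_i)|\lesssim\delta^{-5}|\boldsymbol{\xi}|^{-1}\cdot|\boldsymbol{\xi}|=\delta^{-5}$, so the singularity at the origin is cancelled by the vanishing of $\mathbf{u}(\mathbf{y})-\mathbf{u}(\mathbf{x}_i)$.
\item On the cells touching the kink, $s$ is only Lipschitz. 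There the first-order Taylor error is $O(h\cdot\delta^{-5}\cdot\delta)\cdot h^2$ per cell over $O(\delta/h)$ cells, which is again $O(h^2\delta^{-3})$.
\item The self cell contains $\mathbf{x}_i$, but its contribution is $O(\delta^{-4}h\cdot h^2)$.
\end{itemize}

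To reach the sharper rate $h^2/\delta^2$ claimed in the statement rather than $h^2/\delta^3$, we would go one order further in $\mathbf{u}$. We expand $\mathbf{u}(\mathbf{y})-\mathbf{u}(\mathbf{x}_i)=\nabla\mathbf{u}(\mathbf{x}_i)\boldsymbol{\xi}+O(|\boldsymbol{\xi}|^2)$ and note that the linear part $s(\boldsymbol{\xi})\boldsymbol{\xi}$ is odd. By the symmetric pairing $\mathbf{x}_{i\pm m}$ of the uniform grid used in Lemma~\ref{Uniform Boundedness of Scalar kernel}, and the fact that the intersection geometries of paired cells are point reflections of each other, its quadrature error cancels exactly. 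Only the $O(|\boldsymbol{\xi}|^2)$ part remains, with second derivatives of size $\delta^{-4}$ times $\|\mathbf{u}\|_{C^2}$, which yields $C_{s2}h^2/\delta^2$. Checking that this reflection symmetry survives the IPA-AC centroid shift is the step we expect to require the most care; the last remaining point is to confirm that the residual kink and origin contributions are also at most of order $h^2/\delta^2$.
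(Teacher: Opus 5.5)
Your proposal is correct and follows essentially the same route as the paper. Exact partial areas and the centroid quadrature point remove the zeroth- and first-order Taylor terms on each $\Omega_j^i$, and the point-reflection pairing of the neighborhood about $\mathbf{x}_i$ cancels the $\nabla\mathbf{u}(\mathbf{x}_i)$ contributions that would otherwise leave an $h^2/\delta^3$ term. The paper cancels these coefficient by coefficient after splitting into a kernel-variation part and a displacement-variation part; you remove the odd part $s(\boldsymbol{\xi})\nabla\mathbf{u}(\mathbf{x}_i)\boldsymbol{\xi}$ up front, which is slightly cleaner.

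Your two loose ends close quickly. First, integration only ever runs over the convex set $\Omega_j^i=\tau_j\cap\mathcal{H}(\mathbf{x}_i,\delta)$, where $s$ equals the smooth branch $c_1(1-|\boldsymbol{\xi}|/\delta)$, so no cell contains a kink. Second, for $R(\boldsymbol{\xi})=\mathbf{u}(\mathbf{x}_i+\boldsymbol{\xi})-\mathbf{u}(\mathbf{x}_i)-\nabla\mathbf{u}(\mathbf{x}_i)\boldsymbol{\xi}$ one has $|D^2(sR)|\lesssim c_1(\delta)\|D^2\mathbf{u}\|_{\infty}$ uniformly on $\mathcal{H}$, including at the origin. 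So the cone singularity of $D^2 s$ is harmless; the paper overlooks it when it asserts that $\partial^2_{ij}s$ vanishes.
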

\begin{proof}
Let $\mathbf{y}^{cp}$ denote the centroid of the cell $\omega_{p}$.
\begin{equation*}
\begin{aligned}
	&\|\mathcal{L}_{1,\delta}\mathbf{u} - \mathcal{L}_{1,\delta}^{h}\mathbf{u}\|_{\infty}\\
	&\leq\|\sum_{p}\int_{\Omega_{p}}s(\boldsymbol{\xi})\left(u_{a}(\mathbf{y})-u_{a}(\mathbf{x}_{i})\right) d\mathbf{y}
	-\sum_{p\in\mathcal{F}_{i}}s(\boldsymbol{\xi})\left(u_{a}(\mathbf{y}^{cp})-u_{a}(\mathbf{x}_{i})\right)\mathcal{A}_{p}^{i}\|_{\infty}\\
	&\leq\underbrace{\|\sum_{p}\int_{\Omega_{p}}s(\mathbf{y}-\mathbf{x})\left(u_{a}(\mathbf{y})-u_{a}(\mathbf{x}_{i})\right) d\mathbf{y}
	-\sum_{p}\int_{\Omega_{p}}s(\mathbf{y}^{cp}-\mathbf{x})\left(u_{a}(\mathbf{y})-u_{a}(\mathbf{x}_{i})\right) d\mathbf{y}\|_{\infty}}_{\text{Part A}}\\
	&+\underbrace{\|\sum_{p}\int_{\Omega_{p}}s(\mathbf{y}^{cp}-\mathbf{x})\left(u_{a}(\mathbf{y})-u_{a}(\mathbf{x}_{i})\right) d\mathbf{y}
	-\sum_{p\in\mathcal{F}_{i}}s(\mathbf{y}^{cp}-\mathbf{x})\left(\mathbf{u}(\mathbf{y}^{cp})-u_{a}(\mathbf{x}_{i})\right)\mathcal{A}_{p}^{i}}_{\text{Part B}}
\end{aligned}
\end{equation*}
For Part A, we expand both the kernel function and the displacement field $\mathbf{u}$ with respect to $\mathbf{y}$ about $\mathbf{y}^{cp}$.
\begin{equation*}
\begin{aligned}
	A &= \|\sum_{p}\int_{\Omega_{p}}[s(\mathbf{x},\mathbf{y})-s(\mathbf{x},\mathbf{y}^{cp})][\left(u_{a}(\mathbf{y})-u_{a}(\mathbf{x}_{i})\right)] d\mathbf{y}\|_{\infty}\\
	&=\|\sum_{p}\int_{\Omega_{p}}[\partial_{i}s(\mathbf{x},\mathbf{y}^{cp})\Delta y_{i}^{cp}+\frac{1}{2}\partial_{i,j}^{2}s(\mathbf{x},\mathbf{y}^{cp})\Delta y_{i}^{cp}\Delta y_{j}^{cp}+\mathcal{O}(h^{3})]\\
	&[u_{a}(\mathbf{y}^{cp})-u_{a}(\mathbf{x}_{i})+\partial_{i}u_{a}(\mathbf{y}^{cp})\Delta y_{i}^{cp}+\frac{1}{2}\partial_{i,j}^{2}u_{a}(\mathbf{y}^{cp})\Delta y_{i}^{cp}\Delta y_{j}^{cp}+\mathcal{O}(h^{3})]d\mathbf{y}\|_{\infty}	
\end{aligned}
\end{equation*}
where $\Delta y_{\alpha}^{cp}=y_{\alpha}-y_{\alpha}^{cp},~\alpha=\{i,j\}$. By virtue of $\mathbf{y}^{cp}$	
being the geometric centroid of $\Omega_{p}$, the first moment of the domain vanishes. Consequently, the linear term in the expansion is eliminated upon integration, leaving the integrand dominated by second- and higher-order terms:
\begin{equation*}
\begin{aligned}
	A&=\|\sum_{p}\int_{\Omega_{p}}[\partial_{i}s(\mathbf{x},\mathbf{y}^{cp})\partial_{j}u_{a}(\mathbf{y}^{cp})\Delta y_{i}^{cp}\Delta y_{j}^{cp}\\
	&+\frac{1}{2}[u_{a}(\mathbf{y}^{cp})-u_{a}(\mathbf{x}_{i})][\partial_{i,j}^{2}s(\mathbf{x},\mathbf{y}^{cp})\Delta y_{i}^{cp}\Delta y_{j}^{cp}]+\mathcal{O}(h^{3})]d\mathbf{y}\|_{\infty}\\
	&=\|\sum_{p}\left\{\partial_{i}s(\mathbf{x},\mathbf{y}^{cp})\partial_{j}u_{a}(\mathbf{y}^{cp})+\frac{1}{2}[u_{a}(\mathbf{y}^{cp})-u_{a}(\mathbf{x}_{i})]\partial_{i,j}^{2}s(\mathbf{x},\mathbf{y}^{cp})\right\}\\
	&\int_{\Omega_{p}}\Delta y_{i}^{cp}\Delta y_{j}^{cp}d\mathbf{y}+\mathcal{O}(h^{5})\|_{\infty}.
\end{aligned}
\end{equation*}
With the notation $\Delta x_{k}=y_{k}^{cp}-x_{k}$,
we perform a Taylor expansion of $u_{a}(\mathbf{y}^{cp})$ around $\mathbf{x}_{i}$
\begin{equation*}
\begin{aligned}
	A&=\|\sum_{p}\{\partial_{i}s(\mathbf{x},\mathbf{y}^{cp})[\partial_{j}u_{a}(\mathbf{x})+\partial_{j,k}^{2}u_{a}(\mathbf{x})\Delta x_{k}+\mathcal{O}(\delta^{2})]\\
	&+\frac{1}{2}[\partial_{k}u_{a}(\mathbf{x})\Delta x_{k}+\frac{1}{2}\partial_{k,l}^{2}u_{a}(\mathbf{x})\Delta x_{k}\Delta x_{l}+\mathcal{O}(\delta^{3})][\partial_{i,j}^{2}s(\mathbf{x},\mathbf{y}^{cp})]\}\\
	&\int_{\Omega_{p}}\Delta y_{i}^{cp}\Delta y_{j}^{cp}d\mathbf{y}+\mathcal{O}(h^{5})\|_{\infty}
\end{aligned}
\end{equation*}
Since the kernel function is radial, its first and second derivatives satisfy the following properties, respectively
\begin{equation*}
	\partial_{i}s(\mathbf{x},\mathbf{y})=-\partial_{i}s(-\mathbf{x},-\mathbf{y}),\quad \partial_{i,j}^{2}s(\mathbf{x},\mathbf{y})=-\partial_{i,j}^{2}s(\mathbf{x},\mathbf{y}).
\end{equation*}
As illustrated in the \cref{fig:symmetric}, the neighborhood cells appear in pairs and exhibit central symmetry with respect to the source point $\mathbf{x}_{i}$. Consequently, the term $\partial_{i}s(\mathbf{x},\mathbf{y}^{cp})$ cancels out upon summation. Furthermore, due to this pairwise symmetry, the relative position vectors $\Delta x_{k}$ also appear in pairs with equal magnitude but opposite directions, ensuring that their contribution to the sum likewise vanishes. Consequently, the leading-order term of Part A reduces to
\begin{equation*}
\sum_{p}[\partial_{i}s(\mathbf{x},\mathbf{y}^{cp})\partial_{j,k}^{2}u_{a}(\mathbf{x})\Delta x_{k}+\frac{1}{4}\partial_{i,j}^{2}s(\mathbf{x},\mathbf{y}^{cp})\partial_{k,l}^{2}u_{a}(\mathbf{x})\Delta x_{k}\Delta x_{l}]\int_{\Omega_{p}}\Delta y_{i}^{cp}\Delta y_{j}^{cp}d\mathbf{y}\label{different}
\end{equation*}
Furthermore, since the scalar kernel is linear with respect to $\mathbf{y}$, its second derivative vanishes. Thus, we have:
\begin{equation*}
	A\sim\mathcal{O}\left(\frac{\delta^{2}}{h^{2}}[\delta^{-5}\cdot\delta+0]h^{4}\right)
	\sim\mathcal{O}\left(\frac{\delta^{2}}{h^{2}}\right).
\end{equation*}
Turning to Part B, we expand the displacement field $\mathbf{u}(\mathbf{y})$ in a Taylor series about the centroid $\mathbf{y}^{cp}$. the first moment of the relative position vector $\Delta y_{j}^{cp}$ evaluates to zero, leading to
\begin{equation*}
\begin{aligned}
	B&=\|\sum_{p}\int_{\Omega_{p}}s(\mathbf{y}^{cp}-\mathbf{x})\left[u_{a}(\mathbf{y})-u_{a}(\mathbf{y}^{cp})\right] d\mathbf{y}\|_{\infty}\\
	&=\|\sum_{p}\int_{\Omega_{p}}s(\mathbf{y}^{cp}-\mathbf{x})[\partial_{i}u_{a}(\mathbf{y}^{cp})\Delta y_{i}^{cp}+\frac{1}{2}\partial_{i,j}^{2}u_{a}(\mathbf{y}^{cp})\Delta y_{i}^{cp}\Delta y_{j}^{cp}+\mathcal{O}(h^{3})]d\mathbf{y}\|_{\infty}\\
	&=\|\sum_{p}s(\mathbf{y}^{cp}-\mathbf{x})\partial_{i}u_{a}(\mathbf{y}^{cp})\int_{\Omega_{p}}\Delta y_{i}^{cp}d\mathbf{y}\\
	&+\frac{1}{2}\sum_{p}s(\mathbf{y}^{cp}-\mathbf{x})\partial_{i,j}^{2}u_{a}(\mathbf{y}^{cp})\int_{\Omega_{p}}\Delta y_{i}^{cp}\Delta y_{j}^{cp}d\mathbf{y}+\sum_{p}s(\mathbf{y}^{cp}-\mathbf{x})\mathcal{O}(h^{5})\|_{\infty}\\
	&=\frac{1}{2}\|\sum_{p}s(\mathbf{y}^{cp}-\mathbf{x})\partial_{i,j}^{2}u_{a}(\mathbf{y}^{cp})\int_{\Omega_{p}}\Delta y_{i}^{cp}\Delta y_{j}^{cp}d\mathbf{y}+\sum_{p}s(\mathbf{y}^{cp}-\mathbf{x})\mathcal{O}(h^{5})\|_{\infty}\\
	&\sim\mathcal{O}\left(\frac{\delta^{2}}{h^{2}}\delta^{-4}h^{4}\right)\sim\mathcal{O}\left(\frac{\delta^{2}}{h^{2}}\right).
\end{aligned}
\end{equation*}
Combining the results from Parts A and B, we arrive at
\begin{equation*}
		\|\mathcal{L}_{1,\delta}\mathbf{u} - \mathcal{L}_{1,\delta}^{h}\mathbf{u}\|_{\infty} \sim\mathcal{O}\left(\frac{\delta^{2}}{h^{2}}\right).
\end{equation*}
\end{proof}

\subsubsection{Convergence analysis of scalar kernel}
By synthesizing the uniform stability established in \cref{Subsubsec:Stability analysis of scalar kernel} and the consistency results from \cref{Subsubsec:Consistency analysis of scalar kernel}, we arrive at the central theoretical result of this study. According to the Lax Equivalence Theorem, the convergence of the discrete solution $\mathbf{u}_{\delta}^{h}$​	to the peridynamic solution $\mathbf{u}_{\delta}$ is directly governed by the order of the local truncation error and the boundedness of the inverse operator.
\begin{theorem}\label{Convergence of Scalar Kernel}
(Convergence of Scalar Kernel) Let $\mathbf{u}_{\delta}$ be the solution to the continuous peridynamic equation and $\mathbf{u}_{\delta}^{h}$ be the corresponding discrete solution. For the scalar-kernel-based discretization proposed in this work, we have $$\|\mathbf{u}_{\delta}-\mathbf{u}_{\delta}^{h}\|_{\infty}\sim\mathcal{O}\left(\frac{h^{2}}{\delta^{2}}\right).$$
\end{theorem}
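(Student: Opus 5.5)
The plan is to combine the two ingredients already in hand through the error decomposition \cref{eq:errordecomposition}. Since $\mathcal{L}_{1,\delta}\mathbf{u}_{\delta}=\mathbf{b}$ at every node $\mathbf{x}_{i}\in\mathcal{B}$, and the discrete solution satisfies $\mathcal{L}_{1,\delta}^{h}\mathbf{u}_{\delta}^{h}=\mathbf{b}$ at the same nodes, the nodal error $\mathbf{e}_{h}=\mathbf{u}_{\delta}|_{\mathcal{N}_{\mathcal{B}}}-\mathbf{u}_{\delta}^{h}$ solves
\begin{equation*}
\mathcal{L}_{1,\delta}^{h}\mathbf{e}_{h}=\mathcal{L}_{1,\delta}^{h}\mathbf{u}_{\delta}-\mathcal{L}_{1,\delta}\mathbf{u}_{\delta}.
\end{equation*}
This uses that the Dirichlet volume constraint on the fictitious layer $\hat{\mathcal{B}}$ is imposed identically in both problems, so the contributions from nodes in $\hat{\mathcal{B}}$ cancel in the difference. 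We then apply $(\mathcal{L}_{1,\delta}^{h})^{-1}$, whose existence follows from \cref{Invertibility}, provided the connectivity hypothesis $\delta>h$ holds. Taking the maximum norm gives
\begin{equation*}
\|\mathbf{e}_{h}\|_{\infty}\le\|(\mathcal{L}_{1,\delta}^{h})^{-1}\|_{\infty}\,\|\mathcal{L}_{1,\delta}\mathbf{u}_{\delta}-\mathcal{L}_{1,\delta}^{h}\mathbf{u}_{\delta}\|_{\infty}.
\end{equation*}

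For the first factor, \cref{Uniform Boundedness of Scalar kernel} gives the bound $C_{s1}$, uniform in both $h$ and $\delta$. The mechanism there is the quadratic barrier $v$, for which $\mathcal{L}_{1,\delta}^{h}v$ is a constant $C_{\delta,h}=20\beta$ bounded below. Combined with the nonnegativity of the inverse, this gives $\|(\mathcal{L}_{1,\delta}^{h})^{-1}\|_{\infty}=\|\mathbf{v}_{h}\|_{\infty}\le 1/(4C_{\delta,h})$. For the second factor, \cref{Consistency of Scalar Operator} applies once we check its hypothesis $\mathbf{u}_{\delta}\in C^{2}(\mathcal{B})$, in fact with enough extra smoothness for the Taylor remainders to be controlled uniformly. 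It gives the bound $C_{s2}h^{2}/\delta^{2}$. Multiplying, we get $\|\mathbf{u}_{\delta}-\mathbf{u}_{\delta}^{h}\|_{\infty}\le C_{s1}C_{s2}\,h^{2}/\delta^{2}$, which is the claimed rate. It then reads off as $\mathcal{O}(h^{2})$ for fixed $\delta$, as $\mathcal{O}(\delta^{-2})$ for fixed $h$, and as $\mathcal{O}(1)$ when $\delta/h$ is held fixed.

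The main obstacle is making the constants genuinely independent of $h$ and $\delta$, and this is where I would be most careful.
\begin{itemize}
\item \emph{Regularity.} The consistency constant depends on bounds for $\partial^{2}\mathbf{u}_{\delta}$ and on third-order Taylor remainders. I would state explicitly that $\|\mathbf{u}_{\delta}\|_{C^{3}}$ is bounded uniformly in $\delta$, for instance by assuming a smooth manufactured solution or smooth data.
\item \emph{Second moments.} The cellwise second moments $\int_{\Omega_{p}}\Delta y_{i}^{cp}\Delta y_{j}^{cp}\,d\mathbf{y}$ are $\mathcal{O}(h^{4})$, and there are $\mathcal{O}(\delta^{2}/h^{2})$ cells. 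Summing these against $c_{1}(\delta)\sim\delta^{-4}$ and $|\nabla s|\sim\delta^{-5}$ must actually produce $h^{2}/\delta^{2}$ rather than something worse. To check this I would redo that power count once, noting that the lemma's final display writes $\delta^{2}/h^{2}$ where its statement says $h^{2}/\delta^{2}$, and adopting the statement's form.
\item \emph{Boundary rows.} Near $\hat{\mathcal{B}}$ the symmetric pairing used in the stability proof breaks down. There I would use only the row-wise inequality $\mathcal{L}_{1,\delta}^{h}v\ge C_{\delta,h}$, which still holds because $v\ge0$ on the fictitious layer. Together with the comparison principle from the M-matrix structure, this still gives the uniform bound on $\|(\mathcal{L}_{1,\delta}^{h})^{-1}\|_{\infty}$.
\end{itemize}
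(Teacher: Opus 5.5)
Your argument is the paper's own: feed \cref{Uniform Boundedness of Scalar kernel} and \cref{Consistency of Scalar Operator} into \cref{eq:errordecomposition} and multiply to get $C_{s1}C_{s2}h^{2}/\delta^{2}$. You are also right that the final display of the consistency proof should read $h^{2}/\delta^{2}$, since the power count $\frac{\delta^{2}}{h^{2}}\cdot\delta^{-4}\cdot h^{4}$ gives exactly that.

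One correction to your boundary-row remark: $v$ is not nonnegative on $\hat{\mathcal{B}}$ (e.g.\ $v(-\epsilon,-\epsilon)=-\epsilon(1+\epsilon)$ near a corner). Use instead the shifted barrier $\tilde{v}=v+\delta(1+\delta)\ge 0$ on $[-\delta,1+\delta]^{2}$; every entry of $\mathbf{K}\tilde{\mathbf{v}}$ then equals $C_{\delta,h}$ plus nonnegative fictitious-layer terms, and the M-matrix comparison gives $\|(\mathcal{L}_{1,\delta}^{h})^{-1}\|_{\infty}\le\bigl(\tfrac14+\delta(1+\delta)\bigr)/C_{\delta,h}$.
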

\begin{proof}
	\begin{align}
		\|\mathbf{u}_{\delta}-\mathbf{u}_{\delta}^{h}\|_{\infty}\leq\|(\mathcal{L}_{\delta}^{h})^{-1}\|_{\infty}\|(\mathcal{L}_{\delta}\mathbf{u}_{\delta}-\mathcal{L}_{\delta}^{h}\mathbf{u}_{\delta})\|_{\infty}\sim\mathcal{O}\left(\frac{h^{2}}{\delta^{2}}\right).
	\end{align}
\end{proof}
The derived error estimate explicitly reveals the interplay between the discretization parameter $h$ and the nonlocal horizon $\delta$. These results indicate that, although the IPA-AC method achieves second-order convergence with respect to the mesh size $h$ for a fixed nonlocal model, it lacks the asymptotic compatibility required to recover the classical local model in the concurrent limit without careful calibration of the $\delta/h$ ratio. In the next subsection, it can be seen that the tensor kernel yields the same results.

\subsection{Convergence analysis of tensor kernel}\label{sec:Convergence tensor kernel}
The theoretical framework established for the scalar kernel can be naturally extended to the tensor kernel model. Although the tensor formulation introduces directional coupling between displacement components, the discrete operator preserves the fundamental symmetry properties inherent to the bond-based peridynamic theory. Consequently, the error decomposition follows the same logic as \cref{eq:errordecomposition}, relying on the stability of the stiffness matrix and the consistency of the truncation error.

For the stability analysis, the structural properties of the stiffness matrix associated with the tensor kernel are analogous to those of the scalar case. Provided that the horizon $\delta$ is sufficiently resolved by the mesh to ensure the connectivity of the interaction graph, the discrete tensor operator $\mathcal{L}_{2,\delta}^{h}$ is invertible.

\begin{lemma}\label{Uniform Boundedness of Tensor kernel}
	(Uniform Boundedness of Tensor kernel)
	The inverse of the discrete tensor operator is uniformly bounded independent of the mesh size $h$ and horizon size $\delta$. That is, there exists a constant $C_{t1}$, independent of $h$ such that
	\begin{equation*} \lVert(\mathcal{L}_{2,\delta}^{h})^{-1}\rVert_{\infty} \leq C_{t1}. 
	\end{equation*}
\end{lemma}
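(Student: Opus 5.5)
The plan mirrors \cref{Uniform Boundedness of Scalar kernel}: exhibit an explicit quadratic vector field $\mathbf{v}_h$ on which $\mathcal{L}_{2,\delta}^{h}$ acts as a constant, then read off $\|(\mathcal{L}_{2,\delta}^{h})^{-1}\|_\infty$ from $\|\mathbf{v}_h\|_\infty$.

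\textbf{Step 1 (positivity of the inverse).} The tensor stiffness matrix is a $2\times2$ block matrix, and its blocks $-\mathbf{T}(\mathbf{a}_m)\mathcal{A}^i_{i+m}$ are not entrywise non-positive, because the off-diagonal entries $\xi_1\xi_2$ change sign. So \cref{Invertibility} cannot be quoted directly. I would instead use that $\mathbf{K}$ is symmetric and, under the Dirichlet volume constraint, positive definite. This follows from the energy identity
\begin{equation*}
\mathbf{U}^{T}\mathbf{K}\mathbf{U}=\tfrac12\sum_{i,j} c_2(\delta)\,\frac{\bigl(\mathbf{a}\cdot(\mathbf{U}_j-\mathbf{U}_i)\bigr)^2}{|\mathbf{a}|^3}\,\mathcal{A}_j^i,\qquad \mathbf{a}=\mathbf{x}_j-\mathbf{x}_i,
\end{equation*}
together with the absence of rigid motions once the fictitious layer is fixed. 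Positive definiteness gives invertibility, but by itself it does not give a monotone inverse, and that is what makes $\|(\mathcal{L}_{2,\delta}^{h})^{-1}\|_\infty$ equal to $\|(\mathcal{L}_{2,\delta}^{h})^{-1}\mathbf{1}\|_\infty$. To recover the scalar argument I would try to show that the Cartesian-component operator is dominated by a scalar M-matrix: bound $|\xi_1\xi_2|\le\frac12(\xi_1^2+\xi_2^2)$ and compare with the scalar operator built from the kernel $c_2|\boldsymbol{\xi}|^{-1}$. This comparison step is the main obstacle. If it fails, I would fall back on a discrete maximum principle applied componentwise, using the symmetry of the grid to decouple the two components.

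\textbf{Step 2 (barrier function).} Take $\mathbf{v}=(v,v)^T$ with the same $v$ as in \cref{Vh}, whose central second difference along $\mathbf{a}_m$ is $|\mathbf{a}_m|^2$. Pairing $\pm\mathbf{a}_m$ as in \cref{fig:symmetric} gives
\begin{equation*}
\mathcal{L}_{2,\delta}^{h}\mathbf{v}(\mathbf{x}_i)=c_2(\delta)\sum_{m\in\mathcal{F}_i^+}\frac{\mathbf{a}_m\otimes\mathbf{a}_m}{|\mathbf{a}_m|^3}\,|\mathbf{a}_m|^2\,\mathcal{A}^i_{i+m}\begin{pmatrix}1\\1\end{pmatrix}.
\end{equation*}
On a square grid the lattice is invariant under $\xi_1\mapsto-\xi_1$, so $\sum\xi_1\xi_2(\cdot)=0$. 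It is also invariant under $\xi_1\leftrightarrow\xi_2$, so $\sum\xi_1^2(\cdot)=\sum\xi_2^2(\cdot)$. Hence the matrix sum equals $C^{t}_{\delta,h}\mathbf{I}$, with
\begin{equation*}
C^{t}_{\delta,h}=\frac{c_2(\delta)}{2}\sum_{m\in\mathcal{F}_i^+}|\mathbf{a}_m|\,\mathcal{A}^i_{i+m},
\end{equation*}
and $\mathcal{L}_{2,\delta}^{h}\mathbf{v}=C^t_{\delta,h}\mathbf{1}$. These symmetries need the IPA-AC partial areas to respect the grid symmetries, which holds because $\mathcal{A}_j^i$ depends only on the relative position $\mathbf{x}_j-\mathbf{x}_i$.

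\textbf{Step 3 (uniform lower bound).} Set $\mathbf{v}_h=\mathbf{v}/C^t_{\delta,h}$, so that $\|(\mathcal{L}^h_{2,\delta})^{-1}\|_\infty=\|\mathbf{v}_h\|_\infty=1/(4C^t_{\delta,h})$. Since the IPA-AC weights reproduce the disk area exactly and the summand is a quadrature of $|\boldsymbol{\xi}|$, the sum approximates
\begin{equation*}
\tfrac12\int_{|\boldsymbol{\xi}|\le\delta}|\boldsymbol{\xi}|\,d\boldsymbol{\xi}=\tfrac{\pi\delta^3}{3}.
\end{equation*}
This gives $C^t_{\delta,h}\approx c_2(\delta)\pi\delta^3/6=\frac{12\kappa}{5}$, independent of $h$ and $\delta$. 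To make the bound rigorous, I would use the mean-value argument of the scalar case, $|\mathbf{a}_m|\ge\alpha\delta$ on a fixed fraction of the disk area for $\delta>h$, to get $C^t_{\delta,h}\ge c\kappa$. The result is $C_{t1}=1/(4c\kappa)$.
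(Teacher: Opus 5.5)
Your Steps 2--3 match the paper's proof. The paper reuses the barrier $(v,v)^T$ (suitably normalized) and reports the constant $c_2(\delta)\pi\delta^3/6=\tfrac{12\kappa}{5}$, which agrees with your $C^t_{\delta,h}$; the paper labels this quantity itself $C_{t1}$, whereas the inverse bound is $1/(4C^t_{\delta,h})$, as you have it. The paper then calls the argument ``identical'' to the scalar case and never addresses monotonicity. You are right that monotonicity is the weak point, but your proposal leaves it open, and neither route you suggest closes it.

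\emph{Comparison route.} Take the row of $\mathbf{K}$ for the first component at an interior node. The diagonal entry is $c_2\sum_j a_1^2|\mathbf{a}|^{-3}\mathcal{A}_j^i=\tfrac{c_2}{2}\sum_j|\mathbf{a}|^{-1}\mathcal{A}_j^i$, only half that of the scalar operator with kernel $c_2|\boldsymbol{\xi}|^{-1}$. The absolute off-diagonal row sum is $c_2\sum_j(a_1^2+|a_1a_2|)|\mathbf{a}|^{-3}\mathcal{A}_j^i$, which is larger. So the comparison matrix $\langle\mathbf{K}\rangle$ has interior row sums $-c_2\sum_j|a_1a_2||\mathbf{a}|^{-3}\mathcal{A}_j^i\approx-2c_2\delta=\mathcal{O}(\delta^{-2})$. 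This is a large negative zero-order term acting against an $\mathcal{O}(1)$ diffusion, so for small $\delta$ the symmetric matrix $\langle\mathbf{K}\rangle$ is indefinite. It is therefore not an M-matrix, and no bound $|\mathbf{K}^{-1}|\le\langle\mathbf{K}\rangle^{-1}$ is available.

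\emph{Fallback route.} Grid symmetry only annihilates $\sum_j a_1a_2|\mathbf{a}|^{-3}\mathcal{A}_j^i$ inside the diagonal blocks. The neighbour blocks $-c_2a_1a_2|\mathbf{a}|^{-3}\mathcal{A}_j^i$ still couple $u_1$ and $u_2$ for general data, so there is no componentwise maximum principle.

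In fact the property you need is false, not merely unproven: $\mathbf{K}^{-1}$ has negative entries. Suppose the $u_2$-response to a unit $x_1$-load vanished. Then $u_1$ would solve the irreducible scalar M-matrix problem for the $(1,1)$ block and be strictly positive. The $u_2$-equation at a corner node could then not balance, since all of that node's neighbours in $\mathcal{B}$ have $a_1a_2\ge0$, some strictly. So the response is nonzero, and reflecting the load across $x_2=\tfrac12$ flips its sign, producing a negative entry.

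Hence the identity $\|(\mathcal{L}_{2,\delta}^{h})^{-1}\|_\infty=\|(\mathcal{L}_{2,\delta}^{h})^{-1}\boldsymbol{1}\|_\infty$, on which your Step 3 (and the paper) rests, has no justification. Your barrier computation, though correct, controls only the response to the constant load.

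Separately, even in the scalar case that identity should be an inequality obtained from a supersolution. Once the fictitious-layer values are moved to the right-hand side, $\mathbf{K}\mathbf{v}_h\neq\boldsymbol{1}$. One must shift $v$ by a constant $\gamma=\mathcal{O}(\delta)$ that makes it non-negative on the layer.

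A correct proof needs a different stability mechanism. The energy identity from your Step 1 naturally gives $\ell^2$/energy-norm stability. A uniform $\ell^\infty$ bound for this non-monotone system would require something like discrete Green's-tensor estimates.
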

\begin{proof}
	The proof of uniform boundedness proceeds in an manner identical to that of the scalar kernel presented in \cref{Uniform Boundedness of Scalar kernel}. Specifically, the function $\mathbf{v}_{h}$ defined in \cref{Vh} remains valid for the tensor formulation, satisfying the condition
	\begin{equation*} 
		\mathcal{L}_{2,\delta}^{h}\mathbf{v}_{h} = \boldsymbol{1}. 
	\end{equation*} 
	Furthermore, a direct evaluation yields the specific stability constant for the tensor kernel
	\begin{equation*} 
		C_{t1} = c_{2}(\delta)\frac{\pi\delta^{3}}{6}. \end{equation*} 
	Since $c_{2}(\delta)$ scales with $\delta^{-3}$, $C_{t1}$ remains a constant independent of the horizon size, thereby ensuring the uniform boundedness of the inverse operator.
\end{proof}

\begin{lemma}\label{Consistency of Tensor Operator}
(Consistency of Tensor Operator) 
Assume that the exact solution $\mathbf{u}\in C^{2}(\mathcal{B})$ and the tensor kernel $\mathbf{T}(\boldsymbol{\xi})$ is defined as in \cref{tensor_kernel}. The local truncation error of the discrete tensor operator $\mathcal{L}_{2,\delta}^{h}$ obtained via the IPA-AC method is uniformly bounded by \begin{equation*} 
\lVert\mathcal{L}_{2,\delta}\mathbf{u}_{\delta} - \mathcal{L}_{2,\delta}^{h}\mathbf{u}{\delta}\rVert_{\infty} \leq C_{t2} \frac{h^2}{\delta^2},
\end{equation*} 
where $C_{t2}$is a generic constant independent of the mesh size $h$ and the horizon $\delta$.
\end{lemma}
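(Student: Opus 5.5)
We follow the two-part splitting used in the proof of \cref{Consistency of Scalar Operator}, with the scalar kernel $s$ replaced by the tensor kernel $\mathbf{T}$. Componentwise the truncation error at $\mathbf{x}_i$ is
\begin{equation*}
\sum_{p}\int_{\Omega_{p}}T_{ab}(\mathbf{y}-\mathbf{x}_i)\bigl(u_b(\mathbf{y})-u_b(\mathbf{x}_i)\bigr)d\mathbf{y}-\sum_{p\in\mathcal{F}_i}T_{ab}(\mathbf{y}^{cp}-\mathbf{x}_i)\bigl(u_b(\mathbf{y}^{cp})-u_b(\mathbf{x}_i)\bigr)\mathcal{A}_p^i .
\end{equation*}
Here $\Omega_p=\tau_p\cap\mathcal{H}(\mathbf{x}_i,\delta)$, and $\mathbf{y}^{cp}$ is its centroid, which is exactly the quadrature point used by IPA-AC. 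Because the areas are exact, no geometric error appears. We add and subtract $\sum_p\int_{\Omega_p}T_{ab}(\mathbf{y}^{cp}-\mathbf{x}_i)(u_b(\mathbf{y})-u_b(\mathbf{x}_i))d\mathbf{y}$. Part A is then the kernel-variation error, and Part B is the displacement-variation error with the kernel frozen at the centroid.

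\textbf{Part B.} We Taylor expand $u_b$ about $\mathbf{y}^{cp}$. The first-moment term $\int_{\Omega_p}\Delta y^{cp}_i\,d\mathbf{y}$ vanishes by the centroid property. The remainder is $\frac12 T_{ab}(\mathbf{y}^{cp}-\mathbf{x}_i)\partial^2_{ij}u_b\int_{\Omega_p}\Delta y_i^{cp}\Delta y_j^{cp}d\mathbf{y}$, and the integral is $\mathcal{O}(h^2|\Omega_p|)$. The key bound is $|\mathbf{T}(\boldsymbol\xi)|\le c_2(\delta)|\boldsymbol\xi|^{-1}$. Summing over cells gives
\[
h^2\int_{\mathcal{H}}c_2|\boldsymbol\xi|^{-1}d\boldsymbol\xi\sim h^2\delta^{-3}\delta=h^2\delta^{-2}.
\]
The singularity at $\boldsymbol\xi=0$ is integrable in 2D, and the self-cell can be handled separately.

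\textbf{Part A.} We expand $\mathbf{T}(\mathbf{y}-\mathbf{x}_i)$ about $\mathbf{y}^{cp}$ and $u_b(\mathbf{y})-u_b(\mathbf{x}_i)$ likewise.
\begin{itemize}
\item The product of the linear kernel term with the constant $u_b(\mathbf{y}^{cp})-u_b(\mathbf{x}_i)$ integrates to zero by the centroid property.
\item The surviving second-order terms are $\partial_i T_{ab}\,\partial_j u_b$ and $\frac12\partial^2_{ij}T_{ab}\,(u_b(\mathbf{y}^{cp})-u_b(\mathbf{x}_i))$. Each multiplies $\int_{\Omega_p}\Delta y_i^{cp}\Delta y_j^{cp}d\mathbf{y}=\mathcal{O}(h^2|\Omega_p|)$.
\item Since $\mathbf{T}$ is homogeneous of degree $-1$ in $\boldsymbol\xi$, we have $|\nabla\mathbf{T}|\lesssim c_2|\boldsymbol\xi|^{-2}$ and $|\nabla^2\mathbf{T}|\lesssim c_2|\boldsymbol\xi|^{-3}$.
\item Expanding $\partial_j u_b(\mathbf{y}^{cp})$ and $u_b(\mathbf{y}^{cp})-u_b(\mathbf{x}_i)$ about $\mathbf{x}_i$ exposes the terms odd in $\boldsymbol\xi$: $\partial_i T_{ab}\,\partial_ju_b(\mathbf{x}_i)$ and $\partial^2 T\,\partial u(\mathbf{x}_i)\Delta x$. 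By the central symmetry of the neighborhood (\cref{fig:symmetric}) these cancel pairwise.
\item What remains is of the form $c_2|\boldsymbol\xi|^{-2}\cdot|\boldsymbol\xi|$. Summed over the cells this gives $h^2\int_{\mathcal{H}}c_2|\boldsymbol\xi|^{-1}d\boldsymbol\xi\sim h^2\delta^{-2}$, the same order as Part B.
\end{itemize}

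\textbf{Main obstacle: the singularity of $\mathbf{T}$ near the origin.} For cells within $\mathcal{O}(h)$ of $\mathbf{x}_i$, the Taylor expansion of $\mathbf{T}$ is not controlled, since $\nabla^2\mathbf{T}\sim|\boldsymbol\xi|^{-3}$ is not uniformly small relative to $h$. Our first attempt is to split $\mathcal{H}$ into the near zone $|\boldsymbol\xi|<2h$ and the far zone.
\begin{itemize}
\item In the near zone we estimate both the integral and the quadrature term directly. We use $|\mathbf{T}(\boldsymbol\xi)(\mathbf{u}(\mathbf{x}_i+\boldsymbol\xi)-\mathbf{u}(\mathbf{x}_i))|\lesssim c_2\|\nabla\mathbf{u}\|_\infty$, which is bounded. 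The odd part cancels by symmetry, so the leftover is $\lesssim c_2\|\nabla^2 \mathbf{u}\|_\infty |\boldsymbol\xi|$, and integrating over area $h^2$ gives $c_2h^3\sim h^3\delta^{-3}\le h^2\delta^{-2}$ when $h\le\delta$.
\item In the far zone the expansion above applies. The sum $\sum_p|\boldsymbol\xi_p|^{-1}h^2$ is comparable to the integral, up to a $\log(\delta/h)$ at worst. If a logarithm appears, we would absorb it by using the exact symmetric cancellation of the $|\boldsymbol\xi|^{-3}$ terms over full annuli of cells.
\end{itemize}
Combining Parts A and B yields the bound $C_{t2}h^2/\delta^2$ stated in \cref{Consistency of Tensor Operator}.
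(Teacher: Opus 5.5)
Your route is the paper's: the same Part A/Part B splitting carried over from \cref{Consistency of Scalar Operator}, the vanishing first moment about $\mathbf{y}^{cp}$, and pairwise cancellation under the central symmetry of \cref{fig:symmetric}. The paper simply inserts the scale-$\delta$ sizes $\partial\mathbf{T}\sim\delta^{-5}$ and $\partial^{2}\mathbf{T}\sim\delta^{-6}$ and multiplies by $\delta^{2}/h^{2}$ cells. Your homogeneity bounds $|\nabla^{k}\mathbf{T}|\lesssim c_{2}|\boldsymbol{\xi}|^{-1-k}$, the integral $\int_{\mathcal{H}}c_{2}|\boldsymbol{\xi}|^{-1}d\boldsymbol{\xi}\sim c_{2}\delta$, and the separate near zone are the justified version of that count, and they give the same $h^{2}/\delta^{2}$.

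One step fails as written, though. In the far zone you keep the expansion of $\mathbf{T}(\mathbf{y}-\mathbf{x}_i)$ about $\mathbf{y}^{cp}$ only to second order, and you cancel only the two displayed odd terms. Consider the Taylor remainder of $\mathbf{T}$ beyond second order, paired with the linear part $\partial_k u_b(\mathbf{x}_i)\Delta x_k$ of $u_b(\mathbf{y}^{cp})-u_b(\mathbf{x}_i)$. Its crude size is $c_2|\boldsymbol{\xi}_p|^{-3}h^{3}|\Omega_p|\,\|\nabla\mathbf{u}\|_\infty$ per cell. Summed over $|\boldsymbol{\xi}_p|\geq 2h$, this is $\sim c_2h^{2}\|\nabla\mathbf{u}\|_\infty\sim h^{2}/\delta^{3}$. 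The sum is dominated by cells a few $h$ from $\mathbf{x}_i$, so enlarging the near zone by a fixed factor does not help. The third-order term on the partial cells at the horizon edge behaves the same way, giving $\sim c_2h^{4}\delta^{-2}\|\nabla\mathbf{u}\|_\infty$. These terms do vanish, but only through central symmetry applied to all orders, so ``what remains is of the form $c_2|\boldsymbol{\xi}|^{-2}\cdot|\boldsymbol{\xi}|$'' is not yet justified.

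The clean repair is to remove the linear part at the outset. Write $u_b(\mathbf{x}_i+\boldsymbol{\xi})-u_b(\mathbf{x}_i)=\partial_k u_b(\mathbf{x}_i)\xi_k+r_b(\boldsymbol{\xi})$. The linear part has identically zero truncation error, because $T_{ab}(\boldsymbol{\xi})\xi_k$ is bounded and odd on the disk and the nodes and weights are symmetric. For $g=T_{ab}r_b$, the bounds $|r|\lesssim|\boldsymbol{\xi}|^{2}$, $|\nabla r|\lesssim|\boldsymbol{\xi}|$, $|\nabla^{2}r|\lesssim 1$ give $|\nabla^{2}g|\lesssim c_2\|\nabla^{2}\mathbf{u}\|_\infty|\boldsymbol{\xi}|^{-1}$. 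Hence the centroid-rule error on each convex far cell $\Omega_p$ is $\lesssim c_2\|\nabla^{2}\mathbf{u}\|_\infty|\boldsymbol{\xi}_p|^{-1}h^{2}|\Omega_p|$, which sums to $\lesssim c_2\|\nabla^{2}\mathbf{u}\|_\infty h^{2}\delta$. Your near-zone estimate adds $c_2h^{3}$. This handles Parts A and B together and uses only $\mathbf{u}\in C^{2}$, as the lemma assumes. It also makes the logarithm hedge unnecessary, since no $|\boldsymbol{\xi}|^{-2}$ sums survive.
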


\begin{proof}
The proof of consistency for the tensor operator follows a procedure analogous to that of the scalar kernel presented in \cref{Consistency of Scalar Operator}. The derivation remains structurally identical, with the primary distinction arising in the expansion of the leading-order term in Part A , corresponding to \cref{different}. For the sake of brevity, we omit the redundant steps and explicitly derive only this specific term as follows
\begin{equation*}
\begin{aligned}
A&=\sum_{p}[\partial_{i}\mathbf{T}(\mathbf{x},\mathbf{y}^{cp})\partial_{j,k}^{2}u_{a}(\mathbf{x})\Delta x_{k}+\frac{1}{4}\partial_{i,j}^{2}\mathbf{T}(\mathbf{x},\mathbf{y}^{cp})\partial_{k,l}^{2}u_{a}(\mathbf{x})\Delta x_{k}\Delta x_{l}]\int_{\Omega_{p}}\Delta y_{i}^{cp}\Delta y_{j}^{cp}d\mathbf{y}\\
&=\mathcal{O}\left(\frac{\delta^{2}}{h^{2}}[\delta^{-5}\cdot\delta+\delta^{-6}\cdot\delta^{2}]h^{4}\right)\\
&=\mathcal{O}\left(\frac{\delta^{2}}{h^{2}}\right)
\end{aligned}
\end{equation*}
\end{proof}

\begin{theorem}\label{Convergence of Tensor Kernel}
(Convergence of Tensor Kernel) Let $\mathbf{u}_{\delta}$ be the solution to the continuous peridynamic equation with the tensor kernel, and $\mathbf{u}_{\delta}^{h}$ be the corresponding discrete solution. The numerical scheme converges in the infty norm with the following error estimate: 
\begin{equation*} 
	\lVert\mathbf{u}_{\delta} - \mathbf{u}_{\delta}^{h}\rVert_{\infty}
	\sim \mathcal{O}\left(\frac{h^2}{\delta^2}\right). 
\end{equation*}
\end{theorem}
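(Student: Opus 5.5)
The plan mirrors the proof of \cref{Convergence of Scalar Kernel}: I would invoke the Lax-type error decomposition \cref{eq:errordecomposition} with $\mathcal{L}_{\delta}^{h}=\mathcal{L}_{2,\delta}^{h}$ and $\mathcal{L}_{\delta}=\mathcal{L}_{2,\delta}$, and bound the two factors separately. Since $\mathbf{u}_{\delta}$ solves $\mathcal{L}_{2,\delta}\mathbf{u}_{\delta}=\mathbf{b}$ and $\mathbf{u}_{\delta}^{h}$ solves the discrete system \cref{eq:steady-state} with the same nodal data $\mathbf{b}(\mathbf{x}_i)$, the error $\mathbf{u}_{\delta}-\mathbf{u}_{\delta}^{h}$, restricted to the nodes, satisfies
\begin{equation*}
\mathcal{L}_{2,\delta}^{h}(\mathbf{u}_{\delta}-\mathbf{u}_{\delta}^{h})=\mathcal{L}_{2,\delta}^{h}\mathbf{u}_{\delta}-\mathcal{L}_{2,\delta}\mathbf{u}_{\delta}.
\end{equation*}
On the fictitious layer $\hat{\mathcal{B}}$ both functions obey the same Dirichlet volume constraint, so the error vanishes there. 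The error is therefore controlled by $\|(\mathcal{L}_{2,\delta}^{h})^{-1}\|_{\infty}$ times the nodal truncation error.

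First, I would check that $\mathcal{L}_{2,\delta}^{h}$ is invertible under the connectivity assumption ($\delta>h$), as stated just before \cref{Uniform Boundedness of Tensor kernel}. This requires an argument different from \cref{Invertibility}, because the tensor stiffness matrix has off-diagonal $2\times2$ blocks that are not entrywise nonpositive. I would replace the M-matrix argument with symmetric positive definiteness: the blocks $\mathbf{T}(\boldsymbol{\xi})\mathcal{A}_j^i$ are positive semidefinite, and the Dirichlet constraints on $\hat{\mathcal{B}}$ rule out rigid modes.

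Second, I would apply \cref{Uniform Boundedness of Tensor kernel} to get $\|(\mathcal{L}_{2,\delta}^{h})^{-1}\|_{\infty}\le C_{t1}$ uniformly in $h$ and $\delta$. I would then apply \cref{Consistency of Tensor Operator} with $\mathbf{u}=\mathbf{u}_{\delta}$, assumed to lie in $C^{2}(\mathcal{B})$, to get a truncation error bounded by $C_{t2}h^{2}/\delta^{2}$. Multiplying the two gives
\begin{equation*}
\|\mathbf{u}_{\delta}-\mathbf{u}_{\delta}^{h}\|_{\infty}\le C_{t1}C_{t2}\frac{h^{2}}{\delta^{2}},
\end{equation*}
which is the claimed rate. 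It yields $\mathcal{O}(h^2)$ for fixed $\delta$, $\mathcal{O}(\delta^{-2})$ for fixed $h$, and $\mathcal{O}(1)$ when $\delta/h$ is held constant.

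The main obstacle is that the constants in both lemmas must be genuinely independent of $h$ and $\delta$. For the consistency bound, I would track the tensor-kernel factors explicitly: $c_2(\delta)\sim\delta^{-3}$; $|\nabla\mathbf{T}|\sim c_2\delta^{-2}$ and $|\nabla^2\mathbf{T}|\sim c_2\delta^{-3}$ near the horizon; the cell second moments are $\mathcal{O}(h^4)$; and there are about $\delta^2/h^2$ cells. I would also confirm that the odd-order terms cancel by the central symmetry shown in \cref{fig:symmetric}.

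A further point needs care. The tensor kernel is singular at $\boldsymbol{\xi}=0$ only in its direction, since $|\mathbf{T}|\sim c_2/|\boldsymbol{\xi}|$, so Taylor expansion of $\mathbf{T}$ is not legitimate on cells near the source point. I would try first to treat the $\mathcal{O}(1)$ cells adjacent to $\mathbf{x}_i$ separately. For these cells, the centroid-based quadrature error can be bounded directly using $|\mathbf{u}(\mathbf{y})-\mathbf{u}(\mathbf{x}_i)|\le C|\boldsymbol{\xi}|$. This should give a contribution of order $c_2 h^2\cdot h\cdot$const. That contribution must then be checked to be dominated by $h^2/\delta^2$ in the regimes considered.
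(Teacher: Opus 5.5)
Your main line matches the paper's proof exactly. The paper proves the theorem by inserting $\|(\mathcal{L}_{2,\delta}^{h})^{-1}\|_{\infty}\le C_{t1}$ and the truncation bound $C_{t2}h^{2}/\delta^{2}$ into \cref{eq:errordecomposition} and multiplying. The two places where you go beyond the lemmas (which the paper does not do) fail as written.

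\textbf{Stability.} You correctly note that the blocks $-\mathbf{T}(\boldsymbol{\xi})\mathcal{A}_j^i$ contain positive entries $-c_2(\delta)\xi_1\xi_2|\boldsymbol{\xi}|^{-3}\mathcal{A}_j^i$ when $\xi_1\xi_2<0$. This undermines more than invertibility. \cref{Uniform Boundedness of Tensor kernel} is proved by reusing the scalar barrier argument. Its key step is $\|(\mathcal{L}_{\delta}^{h})^{-1}\|_{\infty}=\|(\mathcal{L}_{\delta}^{h})^{-1}\boldsymbol{1}\|_{\infty}$, which relies on entrywise nonnegativity of the inverse. That nonnegativity was derived from the M-matrix structure you have just shown to be absent. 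So once you make this observation, you cannot cite that lemma as a black box.

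Symmetric positive definiteness does not repair this. It gives invertibility and, granting a uniform discrete Korn--Poincar\'e bound, stability in the norm $\|\mathbf{w}\|_{L^2_h}^2=h^2\sum_i|\mathbf{w}_i|^2$. Passing to the max norm via $\|\mathbf{w}\|_{\infty}\le h^{-1}\|\mathbf{w}\|_{L^2_h}$ then costs a factor $h^{-1}$, leaving only $\mathcal{O}(h/\delta^{2})$. What is missing is a genuine $\ell^\infty$ stability argument for the coupled vector system.

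\textbf{Near field.} Your near-field estimate drops the kernel's singularity. With $|\mathbf{T}(\boldsymbol{\xi})|\sim c_2|\boldsymbol{\xi}|^{-1}$ and only $|\mathbf{u}(\mathbf{y})-\mathbf{u}(\mathbf{x}_i)|\le C|\boldsymbol{\xi}|$, the integrand is merely $\mathcal{O}(c_2)$. Each of the $\mathcal{O}(1)$ cells around $\mathbf{x}_i$ therefore contributes $\mathcal{O}(c_2h^{2})=\mathcal{O}(h^{2}/\delta^{3})$, not $c_2h^{3}$. This is not dominated by $h^2/\delta^2$: it grows like $\delta^{-3}$ for fixed $h$ and like $1/(m^{2}\delta)$ for fixed $m=\delta/h$. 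So the check you propose would fail.

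The fix is to subtract the linear part $\nabla\mathbf{u}(\mathbf{x}_i)\boldsymbol{\xi}$ first. The kernel $\mathbf{T}$ is even, and the cells, IPA-AC areas and quadrature points are all centrally symmetric about $\mathbf{x}_i$. Hence this odd part cancels exactly, in both the integral and the sum. The $\mathcal{O}(|\boldsymbol{\xi}|^{2})$ remainder then gives $\mathcal{O}(c_2h^{3})=\mathcal{O}((h/\delta)^{3})$ near $\mathbf{x}_i$.

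Applied globally, the same subtraction tames the singularity everywhere. The remaining integrand has second derivatives of size $c_2/|\boldsymbol{\xi}|$, so the per-cell error is $c_2h^{4}/|\boldsymbol{\xi}|$. Summing this over annuli gives $c_2h^{2}\delta\sim h^{2}/\delta^{2}$, the rate you want. Note that tracking kernel derivatives only ``near the horizon'' does not by itself justify the interior cells.
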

\begin{proof}
Based on the consistency and stability results established in \cref{Uniform Boundedness of Tensor kernel} and \cref{Consistency of Tensor Operator},
$$\lVert\mathbf{u}_{\delta} - \mathbf{u}_{\delta}^{h}\rVert_{\infty} 
\leq \lVert(\mathcal{L}_{2,\delta}^{h})^{-1}\rVert_{\infty} \lVert\boldsymbol{\tau}\rVert_{\infty} 
\leq C_{t2} \cdot C_{t1} \frac{h^2}{\delta^2} 
= \mathcal{O}\left(\frac{h^2}{\delta^2}\right).$$
\end{proof}
The analytical results for the tensor kernel confirm the universality of the convergence behaviors observed in the scalar case. Specifically, the IPA-AC method exhibits second-order convergence in $h$ for a fixed horizon, but suffers from error amplification as $\mathcal{O}(\delta^{-2})$ when the horizon decreases with a fixed mesh. This reinforces the conclusion that the method, while accurate for fixed nonlocal scales, does not satisfy the asymptotic compatibility.

\section{Numerical Examples}\label{Sec:Numerical}
In this section, we present a comprehensive set of numerical experiments to substantiate the theoretical convergence analysis established in \cref{Sec:Convergence}. The primary objective is to validate the error estimates derived for the IPA-AC algorithm, specifically confirming the predictions of \cref{Convergence of Scalar Kernel} and \cref{Convergence of Tensor Kernel} under three distinct limiting regimes:

$h$-convergence: Evaluation of convergence with respect to mesh refinement for a fixed horizon $\delta$. This test is designed to validate the theoretical second-order convergence rate $\mathcal{O}(h^{2})$.

$\delta$-convergence: Investigation of the influence of the nonlocal horizon $\delta$ on the discretization error for a fixed mesh size $h$. This test validates the negative second-order dependency $\mathcal{O}(\delta^{−2})$ identified in the error bounds.

Asymptotic Compatibility test: Assessment of the error behavior when the ratio $m=\delta/h$ is held constant while $(h,\delta)\rightarrow0$. We aim to demonstrate that the convergence order in this regime is zeroth-order $\mathcal{O}(1)$, indicating error stagnation and a lack of asymptotic compatibility.

All simulations are conducted on a two-dimensional square domain $\mathcal{B}=[0,1]\times[0,1]$. To rigorously quantify the discretization error, the exact solution $\mathbf{u}(\mathbf{x})$ is prescribed. The corresponding body force density $\mathbf{b}(\mathbf{x})$ is derived analytically by directly applying the continuous peridynamic operator to the exact solution. The numerical accuracy is assessed using the discrete $L_{\infty}$ norm:
\begin{equation*}
	\lVert\mathbf{e}\rVert_{\infty} = \max_{\mathbf{x}_i \in \mathcal{L}_{\mathcal{B}}} \lVert\mathbf{u}_{\delta}(\mathbf{x}_i) - \mathbf{u}_{\delta}^{h}(\mathbf{x}_i)\rVert_{\infty}.
\end{equation*}

\subsection{Numerical examples of scalar kernel}

For the scalar kernel defined in \cref{scalar_kernel}, the nonlocal interactions are isotropic and decoupled. Consequently, the vector-valued problem reduces to a system of independent scalar equations. Without loss of generality, we treat the solution $u(\mathbf{x})$ as a scalar field.

We consider three test cases with increasing complexity:

\begin{itemize}
	\item \textbf{Case 1 (Quadratic):} 
	$$u_{1}(\mathbf{x}) = \frac{x_{1}(1 - x_{1})}{2} + \frac{x_{2}(1 - x_{2})}{2}.$$
	This function vanishes on the boundary and represents the function constructed in our stability analysis.
	
	\item \textbf{Case 2 (Cubic/Quadratic Mix):} 
	$$u_{2}(\mathbf{x}) = x_{1}^{3} + 2x_{2}^{2}.$$
	This case introduces asymmetry and higher-order polynomial terms.
	
	\item \textbf{Case 3 (High-order Coupled Polynomial):} 
	$$u_{3}(\mathbf{x}) = x_{1}^{3} x_{2}^{2} + x_{2}^{4}.$$
	This function tests the method's capability to handle complex, coupled polynomial variations.
\end{itemize}

\subsubsection{Convergence with fixed horizon}\label{Convergence with fixed horizon}
In this experiment, we verify the theoretical error estimate with respect to the mesh size $h$ in \cref{Convergence of Scalar Kernel}. We fix the horizon size at $\delta=0.4$ and refine the grid spacing h successively from $h=0.2~(\delta/2)$ down to $h=0.0125~(\delta/32)$.

\cref{tab:scalar_fixed_delta} summarizes the $L_{\infty}$ errors and the corresponding convergence rates for all three test cases.

Case 1 (Quadratic): The error decreases from $3.70\times10^{-2}$ to $1.68\times10^{-4}$. The computed convergence rate starts at $1.86$ and steadily approaches $2.00$, demonstrating a robust second-order convergence.

Case 2 and Case 3 (Higher-order): Both cases exhibit consistent error decay behaviors. For Case 2, the rate evolves from $1.83$ to $2.00$; similarly, Case 3 shows an improvement from $1.85$ to $2.00$.

As visually corroborated in \cref{fig:h-convergence}, the convergence slopes for all cases asymptotically align with the theoretical reference line of slope $2$. These numerical results rigorously confirm that the IPA-AC method achieves second-order accuracy $\mathcal{O}(h^{2})$ for fixed nonlocal models, effectively eliminating the first-order truncation errors typically associated with boundary effects.
\begin{figure}
	\centering
	\includegraphics[width=0.7\linewidth]{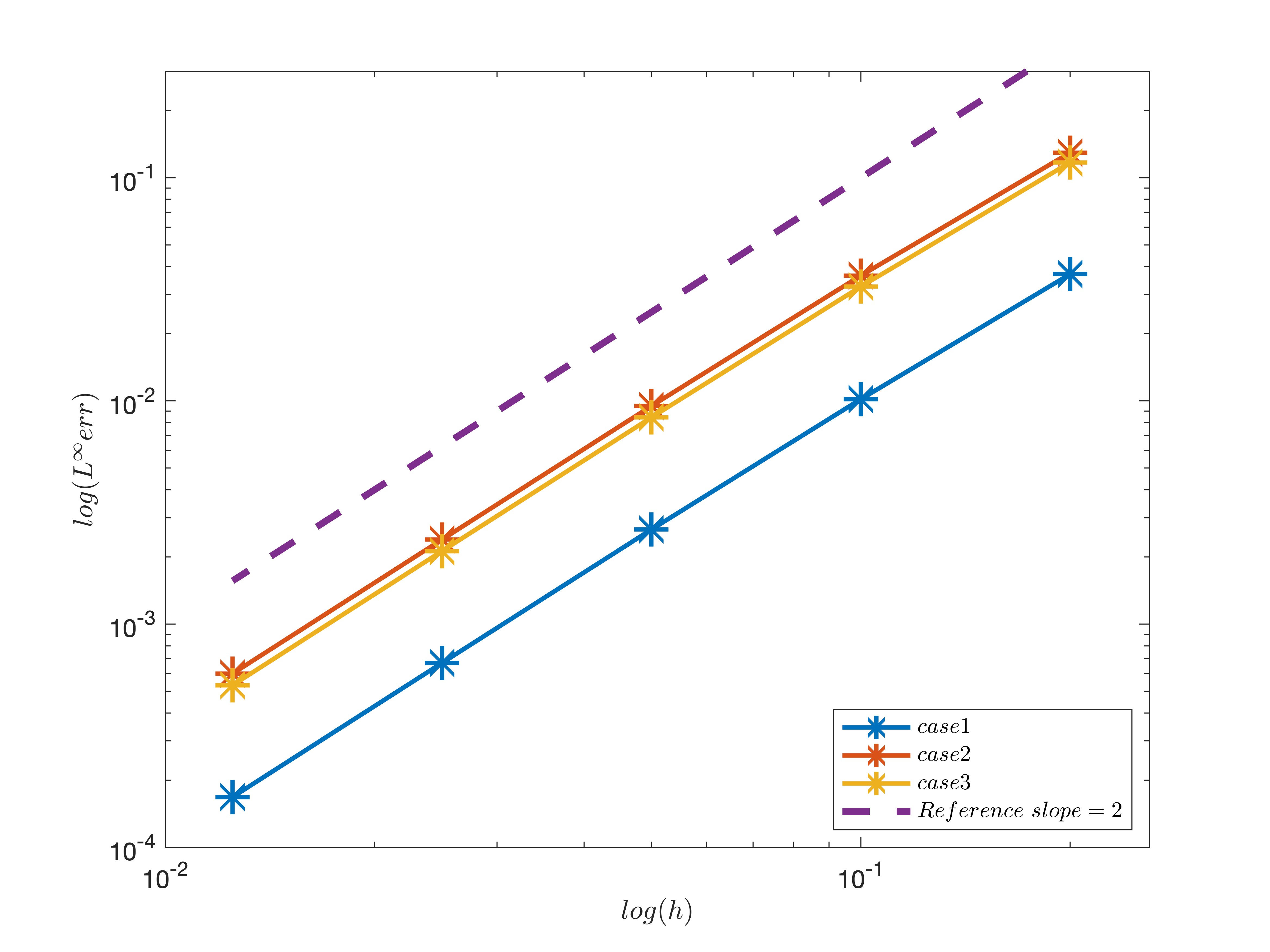}
	\caption{The $h$-convergence of the discretization error for a fixed horizon $\delta$. The solid lines with symbols represent numerical results for Cases 1–3, while the dashed reference line indicates the theoretical $\mathcal{O}(h^{2})$ convergence rate.}
	\label{fig:h-convergence}
\end{figure}

\begin{table}[htbp]
	\centering
	\caption{Convergence results for the scalar kernel with fixed horizon $\delta = 0.4$.}
	\label{tab:scalar_fixed_delta}
	\begin{tabular}{|c|cc|cc|cc|}
		\hline
		\multirow{2}{*}{$h$} & \multicolumn{2}{|c|}{Case 1} & \multicolumn{2}{c|}{Case 2} & \multicolumn{2}{c|}{Case 3} \\ \cline{2-7} 
		& $\|e\|_{\infty}$ & Order & $\|e\|_{\infty}$ & Order & $\|e\|_{\infty}$ & Order \\ \hline
		0.20000 & 3.70e-02 & - & 1.30e-01 & - & 1.17e-01 & - \\
		0.10000 & 1.02e-02 & 1.86 & 3.64e-02 & 1.83 & 3.25e-02 & 1.85 \\
		0.05000 & 2.65e-03 & 1.94 & 9.49e-03 & 1.94 & 8.43e-03 & 1.95 \\
		0.02500 & 6.70e-04 & 1.99 & 2.39e-03 & 1.99 & 2.12e-03 & 1.99 \\
		0.01250 & 1.68e-04 & 2.00 & 6.00e-04 & 2.00 & 5.32e-04 & 2.00 \\ \hline
	\end{tabular}
\end{table}

\subsubsection{Influence of nonlocal horizon}
In this set of experiments, we isolate the influence of the horizon size $\delta$ on the discretization error. We fix the mesh size at $h=0.01$ and gradually decrease $\delta$ from $0.1$ to $0.03$. This corresponds to a reduction in the horizon-to-mesh ratio m from $10$ down to $3$.

\cref{tab:scalar_fixed_h} presents the error evolution and the computed convergence rates with respect to $\delta$.

Across all three test cases, the discretization error exhibits a significant increase as the horizon shrinks, as illustrated by the log-log plot in \cref{fig:deltacovergence}. For instance, in Case 1, the error amplifies by nearly an order of magnitude from $1.32\times10^{-3}$ to $1.27\times10^{-2}$ as $\delta$ decreases.

The computed rates for all cases consistently settle in the range of $-1.85$ to $-1.92$. This strong negative correlation provides empirical verification of the theoretical truncation error bound derived in \cref{Consistency of Scalar Operator}, which predicts a dependency of $\mathcal{O}(\delta^{−2})$.

The results confirm that for a fixed grid resolution, the accuracy of the IPA-AC method is sensitive to the nonlocal length scale. The negative second-order convergence trend highlights the necessity of maintaining a sufficient horizon size relative to the mesh to suppress the $\mathcal{O}(h^{2}/\delta^{2})$ term in the error estimate.
\begin{figure}[htbp]
	\centering
	\includegraphics[width=0.7\linewidth]{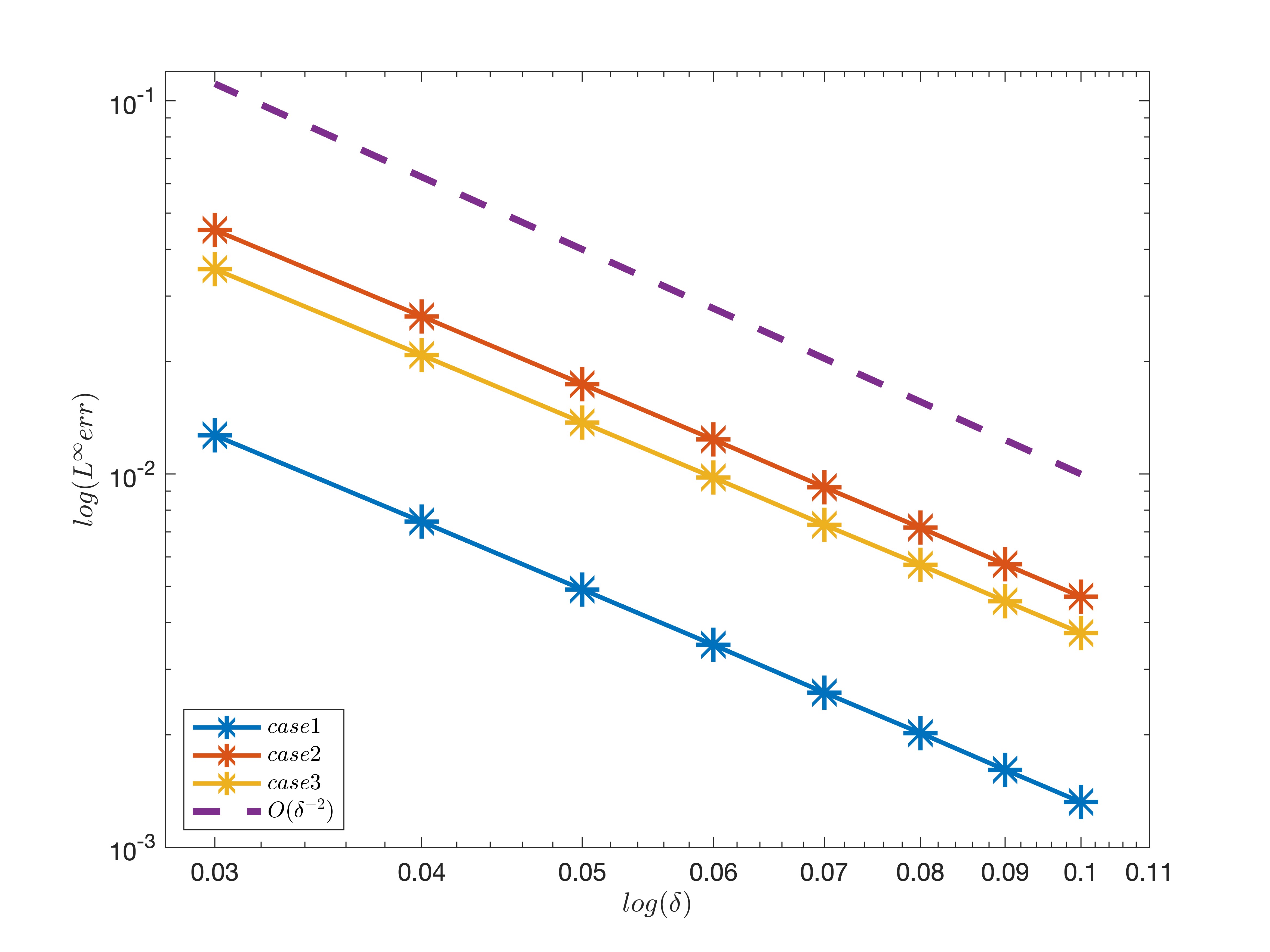}
	\caption{The $\delta$-convergence of the error for a fixed $h$ . The solid lines with symbols represent numerical results for Cases 1–3, while the dashed reference line indicates the theoretical $\mathcal{O}(\delta^{2})$ convergence rate.}
	\label{fig:deltacovergence}
\end{figure}

\subsubsection{Asymptotic Compatibility test}
Finally, we assess the asymptotic compatibility of the proposed scheme. This test involves refining both the mesh size $h$ and the horizon $\delta$ simultaneously towards zero, while maintaining a fixed horizon-to-mesh ratio $m=\delta/h$. We consider three distinct ratios, $m=[3,4,5]$, to investigate the sensitivity of the error to the relative resolution.

\cref{tab:ac_test} details the $L_{\infty}$ errors and the corresponding convergence rates for Cases 1, 2, and 3 under different $m$ ratios. As shown in \cref{fig:mcovergence}, the results reveal a consistent trend across all test functions.

As the discretization is refined ($h\rightarrow0$), the computed convergence orders rapidly decay towards zero. For instance, in Case 1 with $m=3$, the rate drops from $0.19$ to $0.02$; similarly, in Case 3 with $m=5$, despite starting higher at $0.45$, the rate eventually diminishes to $0.05$.

The discretization error does not converge to zero but rather stagnates at a finite, non-zero value. For Case 2 $m=4$, the error levels off at approximately $2.6\times10^{-2}$, indicating an $\mathcal{O}(1)$ scaling behavior.
\begin{table}[htbp]
	\centering
	\caption{Error analysis for the scalar kernel with fixed mesh size $h = 0.01$ and varying horizon $\delta$.}
	\label{tab:scalar_fixed_h}
	\begin{tabular}{|c|cc|cc|cc|}
		\hline
		\multirow{2}{*}{$\delta$} & \multicolumn{2}{|c|}{Case 1} & \multicolumn{2}{c|}{Case 2} & \multicolumn{2}{c|}{Case 3} \\ \cline{2-7} 
		& $\|e\|_{\infty}$ & Order & $\|e\|_{\infty}$ & Order & $\|e\|_{\infty}$ & Order \\ \hline
		0.10 & 1.32e-03 & - & 4.69e-03 & - & 3.75e-03 & - \\
		0.09 & 1.61e-03 & -1.89 & 5.73e-03 & -1.89 & 4.56e-03 & -1.86 \\
		0.08 & 2.02e-03 & -1.93 & 7.18e-03 & -1.92 & 5.71e-03 & -1.90 \\
		0.07 & 2.60e-03 & -1.87 & 9.22e-03 & -1.87 & 7.31e-03 & -1.85 \\
		0.06 & 3.49e-03 & -1.91 & 1.24e-02 & -1.91 & 9.79e-03 & -1.89 \\
		0.05 & 4.91e-03 & -1.88 & 1.74e-02 & -1.88 & 1.37e-02 & -1.86 \\
		0.04 & 7.45e-03 & -1.87 & 2.65e-02 & -1.87 & 2.08e-02 & -1.86 \\
		0.03 & 1.27e-02 & -1.85 & 4.51e-02 & -1.85 & 3.54e-02 & -1.84 \\ \hline
	\end{tabular}
\end{table}
While increasing the ratio m reduces the magnitude of the residual error (e.g., in Case 1, the final error drops from $1.25\times10^{-2}$ at $m=3$ to $4.82\times10^{-3}$
at $m=5$), it does not alter the fundamental lack of convergence. In the log-log plot \cref{fig:mcovergence}, this is manifested as a vertical downward shift of the stagnation lines as m increases.

These numerical observations provide conclusive evidence that the IPA-AC method is not asymptotically compatible. The fixed ratio m effectively locks the truncation error at a constant level, preventing the discrete nonlocal operator from recovering the local limit solution in the concurrent limit. Consequently, to achieve high accuracy in practice, one must either employ a fixed nonlocal model as shown in \cref{Convergence with fixed horizon} or ensure that the horizon converges much slower than the mesh size.
\begin{figure}
	\centering
	\includegraphics[width=0.7\linewidth]{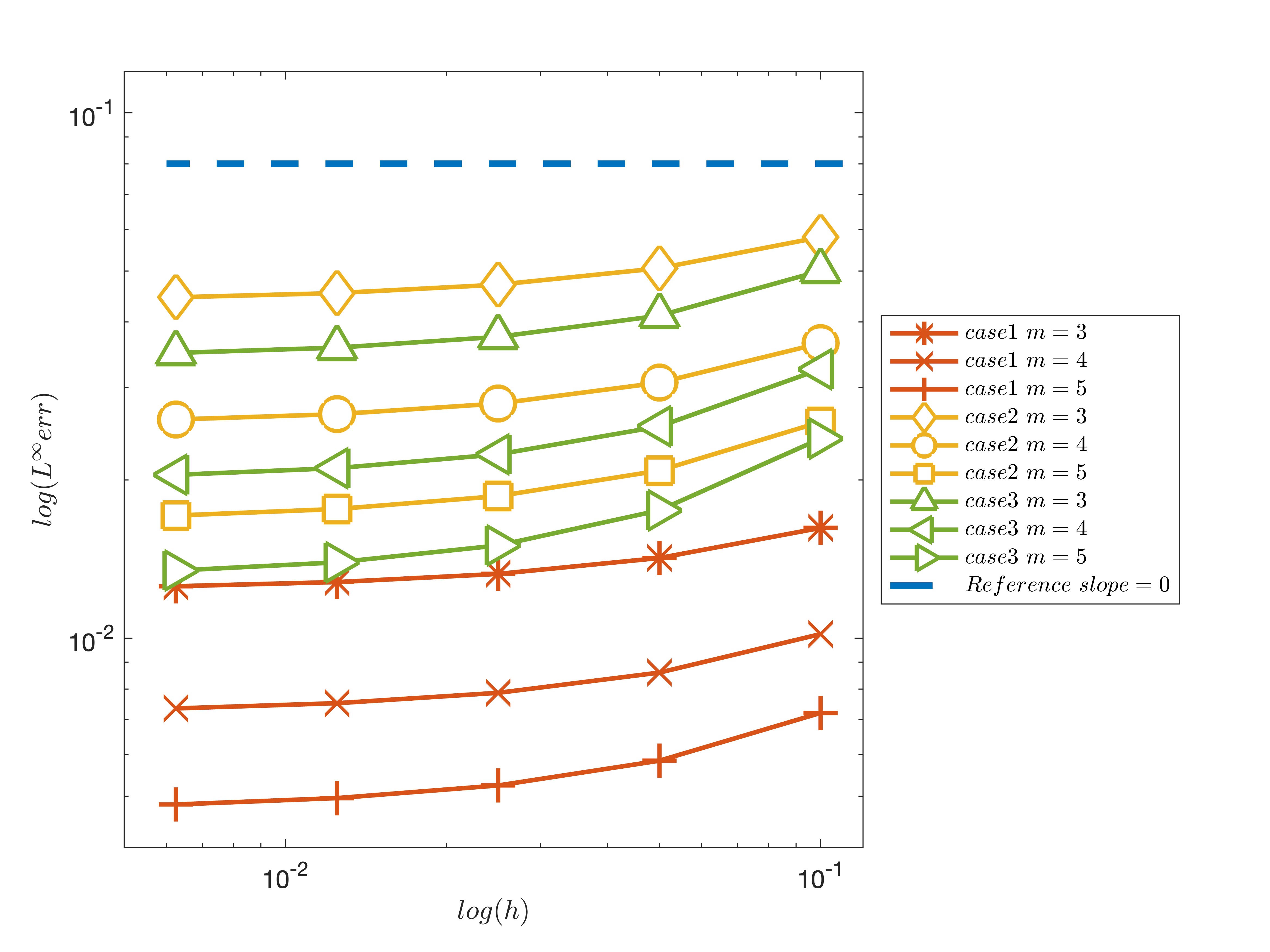}
	\caption{Asymptotic compatibility test with fixed horizon-to-mesh ratios $m\in\{3,4,5\}$. The stagnation of error curves (slope$=0$) as $h\rightarrow0$ demonstrates the lack of asymptotic compatibility, while the downward shift of the plateaus for larger $m$ reflects the reduction in modeling error.}
	\label{fig:mcovergence}
\end{figure}

\begin{table}[htbp]
	\centering
	\caption{Asymptotic compatibility test results: Convergence behavior under fixed ratios $m=\delta/h$ with refining $h$.}
	\label{tab:ac_test}
	\begin{tabular}{|c|c|cc|cc|cc|}
		\hline
		\multirow{2}{*}{Ratio} & \multirow{2}{*}{$h$} & \multicolumn{2}{|c|}{Case 1} & \multicolumn{2}{c|}{Case 2} & \multicolumn{2}{c|}{Case 3} \\ \cline{3-8} 
		&  & $\|e\|_{\infty}$ & Order & $\|e\|_{\infty}$ & Order & $\|e\|_{\infty}$ & Order \\ \hline
		\multirow{5}{*}{$m=3$} 
		& 0.1000 & 1.62e-02 & - & 5.80e-02 & - & 5.00e-02 & - \\
		& 0.0500 & 1.42e-02 & 0.19 & 5.06e-02 & 0.19 & 4.11e-02 & 0.28 \\
		& 0.0250 & 1.32e-02 & 0.10 & 4.70e-02 & 0.10 & 3.74e-02 & 0.13 \\
		& 0.0125 & 1.27e-02 & 0.05 & 4.53e-02 & 0.05 & 3.57e-02 & 0.06 \\
		& 0.0062 & 1.25e-02 & 0.02 & 4.45e-02 & 0.02 & 3.49e-02 & 0.03 \\ \hline
		\multirow{5}{*}{$m=4$} 
		& 0.1000 & 1.01e-02 & - & 3.64e-02 & - & 3.25e-02 & - \\
		& 0.0500 & 8.60e-03 & 0.24 & 3.06e-02 & 0.24 & 2.52e-02 & 0.36 \\
		& 0.0250 & 7.87e-03 & 0.12 & 2.79e-02 & 0.13 & 2.23e-02 & 0.17 \\
		& 0.0125 & 7.52e-03 & 0.06 & 2.67e-02 & 0.06 & 2.10e-02 & 0.08 \\
		& 0.0062 & 7.35e-03 & 0.03 & 2.60e-02 & 0.03 & 2.04e-02 & 0.04 \\ \hline
		\multirow{5}{*}{$m=5$} 
		& 0.1000 & 7.20e-03 & - & 2.58e-02 & - & 2.39e-02 & - \\
		& 0.0500 & 5.84e-03 & 0.30 & 2.08e-02 & 0.30 & 1.75e-02 & 0.45 \\
		& 0.0250 & 5.24e-03 & 0.15 & 1.86e-02 & 0.16 & 1.50e-02 & 0.22 \\
		& 0.0125 & 4.96e-03 & 0.07 & 1.76e-02 & 0.08 & 1.39e-02 & 0.10 \\
		& 0.0062 & 4.82e-03 & 0.04 & 1.71e-02 & 0.04 & 1.34e-02 & 0.05 \\ \hline
	\end{tabular}
\end{table}
\subsection{Numerical examples of tensor kernel}
The consistency of these results with the scalar case confirms that the lack of asymptotic compatibility is an intrinsic property of the IPA-AC method, independent of the kernel structure. A comprehensive summary of these findings and their implications for practical applications will be presented in the concluding section.

We consider a displacement field where each component follows the quadratic profile used in the scalar analysis
\begin{equation*} 
\mathbf{u}(\mathbf{x}) =
\begin{pmatrix} v(\mathbf{x}) \ v(\mathbf{x}) \end{pmatrix}, \quad \text{with } v(\mathbf{x}) 
= \frac{x_1(1-x_1)}{2} + \frac{x_2(1-x_2)}{2}. 
\end{equation*} 
Although the exact solution components are identical, the discrete operator $L_{2,\delta}^{h}$ couples them through the shear-related terms in the tensor kernel. We perform the same set of three limiting tests to verify the theoretical predictions of \cref{Convergence of Tensor Kernel}.

\subsubsection{Convergence with fixed horizon}
We first verify the convergence rate with respect to the mesh size $h$ by fixing the horizon at $\delta=0.4$. The mesh is refined from $h=0.2~(m=2)$ down to $h=0.0125~(m=32)$.

\cref{tab:tensor_fixed_delta} summarizes the results. The $L_{\infty}$ error decays from $2.44\times10^{-2}$ to $8.61\times10^{-5}$. The computed convergence rates are remarkably stable, consistently hovering around $2.03$ to $2.04$, which slightly exceeds the theoretical expectation of $2$. This robust second-order convergence confirms that the symmetry-based error cancellation mechanism remains effective for the coupled tensor system, validating the bound $\lVert e\rVert_{\infty}\sim\mathcal{O}(h^{2})$ for fixed horizons.
\begin{table}[htbp]
	\centering
	\caption{Convergence results for the tensor kernel with fixed horizon $\delta = 0.4$.}
	\label{tab:tensor_fixed_delta}
	\begin{tabular}{|c|cc|}
		\hline
		$h$ & $\|e\|_{\infty}$ & Order \\ \hline
		0.20000 & 2.44e-02 & - \\
		0.10000 & 5.92e-03 & 2.04 \\
		0.05000 & 1.44e-03 & 2.04 \\
		0.02500 & 3.51e-04 & 2.04 \\
		0.01250 & 8.61e-05 & 2.03 \\ \hline
	\end{tabular}
\end{table}
\subsubsection{Influence of nonlocal horizon}
Next, we investigate the sensitivity of the error to the nonlocal horizon $\delta$ under a fixed mesh resolution $h=0.01$. The horizon is decreased from $0.1$ to $0.03$, corresponding to a reduction in the ratio $m$ from $10$ to $3$.

The results in \cref{tab:tensor_fixed_h} reveal a clear trend: the error increases as the horizon shrinks. The computed convergence orders with respect to $\delta$ range from $-1.94$ to $-1.98$. This strong negative correlation provides empirical confirmation of the theoretical estimate $\mathcal{O}(\delta^{-2})$. It indicates that even for the tensor kernel, the discretization error is inversely proportional to the square of the horizon size, highlighting the risk of accuracy loss when $\delta$ approaches the grid scale.

\begin{table}[htbp]
	\centering
	\caption{Error analysis for the tensor kernel with fixed mesh size $h = 0.01$ and varying horizon $\delta$.}
	\label{tab:tensor_fixed_h}
	\begin{tabular}{|c|cc|}
		\hline
		$\delta$ & $\|e\|_{\infty}$ & Order \\ \hline
		0.10 & 6.85e-04 & - \\
		0.09 & 8.41e-04 & -1.94 \\
		0.08 & 1.06e-03 & -1.96 \\
		0.07 & 1.37e-03 & -1.94 \\
		0.06 & 1.86e-03 & -1.98 \\
		0.05 & 2.66e-03 & -1.96 \\
		0.04 & 4.14e-03 & -1.98 \\
		0.03 & 7.30e-03 & -1.97 \\ \hline
	\end{tabular}
\end{table}

\subsubsection{Asymptotic Compatibility test}
Finally, we test the asymptotic compatibility by refining both $h$ and $\delta$ while keeping their ratio $m=\delta/h$ constant at $m=[3,4,5]$.

\cref{tab:tensor_ac_test} presents the convergence behavior in the fixed ratio $m=\delta/h$. Mirroring the trends identified in the scalar analysis, the numerical results exhibit the following characteristics:

The errors do not vanish as $h\rightarrow0$. For instance, with $m=5$, the error stabilizes at approximately $2.61\times10^{-3}$.

The convergence orders drop rapidly towards zero (e.g., reaching as low as $0.03$ for $m=3$ and $0.05$ for $m=5$).

Increasing $m$ reduces the absolute error magnitude (from $7.21\times10^{-3}$ at $m=3$ to $2.61\times10^{-3}$ at $m=5$), but fails to recover the convergence rate.

These results conclusively demonstrate that the IPA-AC method applied to the tensor kernel is not asymptotically compatible. The error behavior is dominated by the path-dependent term $\mathcal{O}(h/\delta)^{2})=\mathcal{O}(m^{−2})$, which remains constant in this scaling regime.
\begin{table}[htbp]
	\centering
	\caption{Asymptotic compatibility test results for the tensor kernel: Convergence behavior under fixed ratios $m=\delta/h$.}
	\label{tab:tensor_ac_test}
		\begin{tabular}{|c|cc|cc|cc|}
			\hline
			\multirow{2}{*}{$h$} & \multicolumn{2}{|c|}{$m=3$} & \multicolumn{2}{c|}{$m=4$} & \multicolumn{2}{c|}{$m=5$} \\ \cline{2-7} 
			& $\|e\|_{\infty}$ & Order & $\|e\|_{\infty}$ & Order & $\|e\|_{\infty}$ & Order \\ \hline
			0.10000 & 9.66e-03 & - & 5.92e-03 & - & 4.13e-03 & - \\
			0.05000 & 8.31e-03 & 0.22 & 4.88e-03 & 0.28 & 3.25e-03 & 0.34 \\
			0.02500 & 7.67e-03 & 0.11 & 4.41e-03 & 0.15 & 2.87e-03 & 0.18 \\
			0.01250 & 7.37e-03 & 0.06 & 4.19e-03 & 0.08 & 2.70e-03 & 0.09 \\
			0.00625 & 7.21e-03 & 0.03 & 4.08e-03 & 0.04 & 2.61e-03 & 0.05 \\ \hline
		\end{tabular}
\end{table}

\section{Conclusion}\label{Sec:Conclusion}
In this work, we have presented a comprehensive theoretical and numerical analysis of the IPA-AC method for peridynamic models. By establishing a rigorous error estimate framework based on the Lax Equivalence Theorem, we derived the explicit convergence rates for both scalar and tensor kernels under various limiting regimes.

The theoretical and numerical results consistently reveal the dual nature of the IPA-AC method:
When the horizon $\delta$ is fixed as a physical parameter, the method exhibits robust second-order convergence $\mathcal{O}(h^{2})$ in the $L_{\infty}$ norm. The geometric corrections effectively eliminate the first-order boundary errors common in standard meshfree discretizations.

However, the method is proven not to be asymptotically compatible. The discretization error scales as $\mathcal{O}(\delta^{-2})$, preventing the recovery of the local limit solution in the concurrent limit $(h,\delta)\rightarrow0$ unless the ratio $m=\delta/h$ is sufficiently large.

These findings offer practical insights into the optimal applicability of the IPA-AC method. The method emerges as a superior candidate for simulating fixed nonlocal models, particularly in scenarios where accurately capturing boundary effects is paramount. In contrast, for problems involving a varying horizon $\delta$, the horizon-to-mesh ratio requires careful calibration to effectively suppress the intrinsic discretization error.

Finally, we remark that the theoretical framework established in this work is naturally extensible to three-dimensional problems. Furthermore, while the current IPA-AC method is not asymptotically compatible, its precise geometric representation offers a valuable foundation for future improvements. By increasing the number of integration points and re-calibrating the quadrature weights to strictly enforce polynomial reproduction conditions, it is feasible to develop an enhanced scheme that retains high geometric fidelity while satisfying the requirements for asymptotic compatibility.




\end{document}